\newenvironment{proof}[1][Proof]{\begin{trivlist}
\item[\hskip \labelsep {\bfseries #1}]}{\end{trivlist}}
\newtheorem{theorem}{Theorem}
\newtheorem{remark}{Remark}
\newtheorem{lemma}{Lemma}
\begin{document}

\title{\LARGE \bf
Minimum Cost Input/Output Design for Large-Scale Linear Structural Systems}
%
%\thanks{The first two authors contributed equally to this work. }
%\thanks{This work was partially supported by grant SFRH/BD/33779/2009, from Funda\c{c}\~ao para a Ci\^encia e a Tecnologia  (FCT) and  the CMU-Portugal (ICTI) program, and by projects CONAV/FCT-PT (PTDC/EEACRO/113820/2009), FCT (PEst-OE/EEI/LA0009/2013),  MORPH (EU FP7 No. 288704), and NSF grant 1306128.}
%
%
%
%\author[Ramos]{Guilherme Ramos}$^{,\star}$\ead{guilherme.ramos@ist.utl.pt}\qquad     % Add the 
%\author[Ramos,Pequito]{S\'ergio Pequito}$^{,\star}$\ead{spequito@andrew.cmu.edu}    \qquad            % e-mail address 
%\author[Pequito]{Soummya Kar}\ead{soummyak@andrew.cmu.edu}  \hspace{3cm} % (ead) as shown
%\author[Ramos,Aguiar]{A. Pedro Aguiar}\ead{pedro.aguiar@fe.up.pt} \quad   % (ead) as shown
%\author[Ramos2]{Jaime Ramos}\ead{jabr@math.ist.utl.pt}  % (ead) as shown
%
%\address[Ramos]{Institute for System and Robotics, Instituto Superior T\'ecnico,  University of Lisbon, Lisbon, Portugal}  % Please supply                                              
%\address[Pequito]{Department of Electrical and Computer Engineering, Carnegie Mellon University, Pittsburgh, USA}             % full addresses
%\address[Aguiar]{Department of Electrical and Computer Engineering, Faculty of Engineering, University of Porto, Porto, Portugal}        % here.
%\address[Ramos2]{SQIG-Instituto de Telecomunica\c{c}\~oes, Department of Mathematics, Instituto Superior T\'ecnico, \\  University of Lisbon, Lisbon, Portugal}        % here.

\author{\IEEEauthorblockN{  S\'ergio Pequito $^{\dagger,\ddagger}$ \qquad  Soummya Kar $^{\dagger}$ \qquad
A. Pedro Aguiar $^{\ddagger,\diamond}$ }

\thanks{ 

$^{\dagger}$ Department of Electrical and Computer Engineering, Carnegie Mellon University, Pittsburgh, PA 15213

$^{\ddagger}$  Institute for System and Robotics, Instituto Superior T\'ecnico, Technical University of Lisbon, Lisbon, Portugal

$^{\diamond}$ Department of Electrical and Computer Engineering, Faculty of Engineering, University of Porto, Porto, Portugal

}

}

\maketitle

\vspace{-1cm}

\begin{abstract}                          % Abstract of not more than 200 words.
In this paper, we provide optimal  solutions to two different (but related) input/output design  problems involving large-scale linear dynamical systems, where the cost associated to each directly actuated/measured state variable can take different values, but is independent of the labeled input/output variable. Under these conditions, we first aim to determine and characterize the input/output placement that incurs in the minimum cost while ensuring that the resulting placement achieves structural controllability/observability. Further, we address a constrained variant of the above problem, in which we seek to determine the minimum cost placement configuration, among all possible input/output placement configurations that ensures structural controllability/observability, with the lowest number of directly actuated/measured state variables. We show that both  problems can be solved efficiently, i.e., using  algorithms with polynomial time complexity in the number of the state variables. Finally, we illustrate the obtained results with an example.

\vspace{0.3cm}
 \textbf{Keywords} : Linear Structural Systems, I/O Selection, Graph Theory, Computational Complexity  
\end{abstract}

\IEEEpeerreviewmaketitle

\section{Introduction}

The  problem of control systems  design, meeting certain desired specifications, is of fundamental importance. Possible specifications include (but are not restricted  to) controllability and observability. These specifications ensure the capability of a dynamical system (such as chemical process plants, refineries, power plants, and airplanes, to name a few) to drive its state toward a specified goal or infer its present  state. To achieve these specifications, the selection of where to place the actuators and sensors assumes a critical importance.   More often than not, we need to consider the cost per actuator/sensor, that depends on its specific funtionality and/or its installation and maintenance cost. The resulting placement cost optimization problem  (apparently combinatorial) can be quite non-trivial, and currently applied  state-of-the-art methods typically consider  relaxations of the optimization problem, brute force approaches or heuristics, see for instance~\cite{reviewPlacement,Frecker01042003,WilliamBegg200025,1025322,Fahroo2000137}.

An additional problem is the fact  that the precise numerical values of the system model parameters are generally not available for many large-scale systems of interest. A natural direction is to consider structural systems~\cite{dionSurveyKyb} based reformulations, which we pursue in this work. Representative work in structural systems theory may be found in \cite{Lin_1974,largeScale,Reinschke:1988,Murota:2009:MMS:1822520}, see also the survey \cite{dionSurvey} and references therein. The main idea is to reformulate and  study  an equivalent class of systems for which system-theoretic properties are investigated on the basis of  the location of zeros/nonzeros of the state space representation matrices. Properties such as controllability and observability are, in this framework, referred to  as  \emph{structural controllability}\footnote{A pair $(A,B)$ is said to be structurally controllable if there exists a pair $(A', B')$ with the same structure as $(A,B)$, i.e., same locations of zeros and nonzeros, such that $(A',B')$ is controllable. By density arguments \cite{Reinschke:1988}, it can be shown that if a pair $(A,B)$ is structurally controllable, then almost all (with respect to the Lebesgue measure) pairs with the same structure as $(A,B)$ are controllable. In essence, structural controllability is a property of the structure of the pair $(A,B)$ and not of the specific numerical values. A similar definition and characterization holds for structural observability (with obvious modifications). } and \emph{structural observability}, respectively.

In this context, consider a given (possibly large-scale) system with autonomous dynamics
\begin{equation}
\dot x = A x,
\label{dynSys}
\end{equation}
where $x \in\mathbb{R}^n$ denotes the state and $A$ is the  $n\times n$ dynamics matrix. Suppose that the sparsity (i.e., location of zeros and nonzeros) pattern of $A$ is available, but the specific numerical values of its nonzero elements are not known. Let $\bar A \in \{0,1\}^{n\times n}$ be the binary matrix that represents the structural pattern of $A$, i.e., it encodes the sparsity pattern of $A$  by assigning $0$ to each zero entry of $A$ and $1$ otherwise. The two problems  addressed in this paper are as follows.

\subsection*{\underline{Problem Statement}}

\begin{itemize}
\item[${\mathcal{P}}_1$]  

Given the structure of the dynamics matrix $\bar A\in\{0,1\}^{n\times n}$ and a vector $c$ of size $n$, where the entry $c_i\ge 0$ denotes the cost of directly actuating the state variable $i$, determine the sparsity of the input matrix $\bar B$  that solves the following optimization problem
\begin{align}
\min_{\bar B\in \{0,1\}^{n \times n}} &\qquad\qquad  \|\bar B\|_{c}\label{probP1}\\
\text{s.t.} \qquad &  (\bar A,\bar B) \text{ structurally controllable}\notag\\
& \quad \|\bar B\|_0\le \|\bar B'\|_0 , \text { for all }\notag\\  &\quad (\bar A,\bar B') \text{ structurally controllable},\notag
\end{align}
where $
\|\bar B\|_{c}=c^T\bar B \mathbf{1}$,  
$\|\bar B\|_0$ denotes the zero (quasi) norm corresponding to the number of nonzero entries in $\bar{B}$, and $\mathbf{1}$ the vector of ones with size $n$.
\vspace{0.3cm}

Observe that in $\mathcal P_1$ there is a restriction of obtaining a solution with the minimum number of state variables that need to be directly actuated in order to achieve structural controllability. Without such restriction, i.e., by possibly actuating more state variables, we may obtain a lower cost placement achieving structural controllability, which leads to the following related problem.

\vspace{0.3cm}

\item[${\mathcal{P}}_2$]  

 Given the structure of the dynamics matrix $\bar A\in\{0,1\}^{n\times n}$ and a vector $c$ of size $n$, where the entry $c_i\ge 0$ denotes the cost of directly actuating the  state variable $i$, determine   the sparsity of the  input matrix $\bar B$  that solves the following optimization problem
\begin{align}
\min_{\bar B\in \{0,1\}^{n \times n}} &\qquad\qquad  \|\bar B\|_{c}\label{probP2}\\
\text{s.t.} \qquad & \quad (\bar A,\bar B) \text{ structurally controllable}.\notag
\end{align}

\end{itemize}

Notice that in the above problems, we assume that  the cost associated to each directly actuated/measured state variable can take different values, but is independent of the labeled input/output variable. In addition, observe that in $\mathcal P_1$ and $\mathcal P_2$, some solutions may comprise at most one nonzero entry in each column; in other words, solutions in which each input actuates  at most one state variable. Such inputs are referred to as \emph{dedicated inputs}, and they correspond to columns of the input matrix $\bar{B}$ with exactly one nonzero entry. Additionally, if a   solution $\bar B^*$ is such that all its nonzero columns consist of exactly one nonzero entry, it is referred to as a \emph{dedicated solution}, otherwise it is referred to as  a \emph{non-dedicated solution}. Finally, we notice that a solution to $\mathcal P_1$ or $\mathcal P_2$ may consist of columns with all zero entries, that can be disregarded when considering the deployment of the inputs required to actuate the system.  Notice that in the worst case scenario, taking the identity matrix as the input matrix we obtain structural controllability, which justifies the dimensions chosen for the solution search space.
\vspace{0.4cm}

\begin{remark}
The solution procedures for  $\mathcal{P}_{1}$ and $\mathcal P_2$ also address the corresponding  structural observability output matrix design problem by invoking the duality between observability and controllability in linear time-invariant (LTI) systems~\cite{Hespanha09}.\hfill $\diamond$
\end{remark}

Recently, the I/O selection problem have received increasing attention in the literature, especially, since the publication of~\cite{Olshevsky}. In~\cite{Olshevsky}, the \emph{minimal controllability problem}, i.e., the problem of determining the sparsest input matrix that ensures controllability of a given LTI system, was shown to be NP-hard. Furthermore, in \cite{Olshevsky} some greedy algorithms ensuring controllability are provided, and in~\cite{MinContNPcomp} procedures for obtaining optimal solutions in specific cases are presented. Although the minimal controllability problem is NP-hard, in~\cite{PequitoJournal}, using graph theoretical constructions, the  structural version of the minimal controllability problem, or the \emph{minimal structural controllability problem}, was shown to be polynomially solvable. Note, in turn, this implies that  for almost all numerical realizations of the dynamic matrix, satisfying a predefined pattern of zeros/nonzeros, the associated minimal controllability problem is polynomially solvable. Alternatively, in~\cite{Summers,Tzoumas,Clark,ClarkCDC,Pasqualetti,linfarjovTAC14leaderselection} the configuration of actuators is sought to ensure certain performance criteria; more precisely,  \cite{Summers,Tzoumas,Pasqualetti,ClarkCDC} focus on optimizing properties of the controllability Grammian,  whereas~\cite{Clark} studies leader selection problems in order to minimize convergence errors experienced by the follower agents, given in terms of the norm of the difference between the follower agents' states and the convex hull of the leader agent states, and~\cite{linfarjovTAC14leaderselection} aims to identify leaders that are
most effective in minimizing the deviation from consensus in
the presence of disturbances.  In addition, in~\cite{Clark,Summers,Tzoumas} the submodularity properties of functions of the controllability Grammian are explored, and design algorithms are proposed that achieve feasible placement with certain guarantees on the optimality gap. The I/O selection problem considered in the present paper differs from the aforementioned problems in the following three aspects: first, the selection of the inputs is not restricted to belong to a specific given set of possible inputs, which are referred to as constrained input selection (CIS) problems. Secondly, it contrasts with~\cite{Summers,Tzoumas,Clark,Pasqualetti} in the sense that we do not aim to ensure performance in terms of a function of the controllability Grammian, but we aim to minimize the overall actuation cost, measured in terms of manufacturing/installation/preference costs. Lastly,  instead of optimizing a specific (numerical) system instance, we focus on structural design guarantees that hold for almost all (numerical) system instantiations with a specified dynamic coupling structure. The minimum CIS when only the structure is considered, i.e., the problem of determining the minimum collection of inputs in a CIS problem, to ensure structural controllability,  has previously been extensively studied, see~\cite{CommaultD13,dionSurvey} and references therein. Nevertheless, the minimum CIS problem  is shown to be NP-hard~\cite{NPcompCMIS} in general. In addition, it is not difficult to see that  any problem that aims to minimize the cost of a collection of inputs in a CIS problem, where  arbitrary (non-zero) actuation costs are considered, is at least as difficult as the minimum CIS, hence, also NP-hard. Notice that, from the computational complexity point of view,  this also contrasts with the problems explored in this paper. 

 On the other hand, the problems presented here are  closely related with  those presented in~\cite{PequitoJournal}, where the sparsest (structural) input matrix among all structural input matrices ensuring structural controllability is sought; in particular, the latter corresponds to $\mathcal P_1$ and $\mathcal P_2$ when the costs are uniform, i.e., each variable incurs in the same (non-zero) cost.    In addition, in~\cite{PequitoJournal}  an algorithm   to compute  a minimum subset of state variables that need to be actuated to ensure structural controllability of the system  is described. However, the techniques developed in \cite{Pequito2013,PequitoJournal}, despite providing useful insights, are not sufficient to address the problems $\mathcal{P}_{1}$ and $\mathcal P_2$ with non-uniform cost, i.e., when actuating different state variables incur  in different costs.  More recently, some preliminary results on problems $\mathcal P_1$ and $\mathcal P_2$ were presented by us in \cite{PequitoECC} and  \cite{pequitocdc13} respectively, by exploiting some intrinsic  properties of the class of all minimal subset of state variables that need to be actuated by dedicated inputs to ensure structural controllability. Although \cite{PequitoECC} and  \cite{pequitocdc13} provided polynomial algorithmic solutions to $\mathcal P_1$ and $\mathcal P_2$, they incur in higher polynomial complexity (more precisely, $\mathcal O (n^{3.5})$) than the ones presented in this paper incurring in $\mathcal O(n^3)$; furthermore, the solutions presented in the present paper do not require the exploration of the properties of the feasibility space, and reduces the problem to a well known graph-theoretic problem, the weighted maximum matching problem. Very recently, we came across~\cite{alex14}, where the problem $\mathcal P_1$ is addressed for a specific binary actuation cost structure in which the cost vector is restricted to be of the form  $c\in\{0,\infty\}^{n}$, i.e., corresponding to a subset of state variables that can be actuated with the same finite (zero) cost, and the rest that are \emph{forbidden}, i.e., with an infinite  actuation cost. The proposed method in \cite{alex14} achieves computational complexity $\mathcal O(n^{2.5})$; however, this approach is restricted to the special binary cost structure as described above. Furthermore, it is not likely that algorithms to compute the solution to the problems $\mathcal P_1$ and $\mathcal P_2$ with a lower complexity than the ones presented in the present paper are attainable for general cost vectors $c$ (using the same framework), since they  require the computation of a minimum weighted maximum matching, for which the fastest existing algorithms share a complexity equal to the ones  presented here, see \cite{Duan} for recent advances and literature survey.  Consequently, the  methodologies presented in this paper are more suitable to address $\mathcal P_1$ and $\mathcal P_2$ than those in \cite{PequitoECC} and \cite{pequitocdc13}, when dealing with more general cost structures and higher dimensional LTI systems. In addition, we provide a relation between $\mathcal P_1$ and $\mathcal P_2$ that sheds some light on possible extensions towards less restrictive cost assumptions.

The main contributions of this paper are as follows:  we show that we can solve $\mathcal{P}_1$ and $\mathcal P_2$  resorting to algorithms with complexity $\mathcal O(n^3)$, where $n$ is the dimension of the state space. In addition, these algorithms are obtained  while exploring the relation between these two  problems, i.e.,  using some insights from $\mathcal P_1$   to solve $\mathcal P_2$.

The rest of the paper is organized as follows: Section~\ref{preresults}  reviews results from structural systems and its implications on optimal input-output placement in LTI systems with uniform placement cost; furthermore, it explores some graph theorectical concepts required to obtain the main results of this paper. Section~\ref{mainresults} presents the main results of the paper, in particular, a procedure to determine the minimal cost placement of inputs  in LTI systems, as formulated in $\mathcal{P}_{1}$ and $\mathcal P_2$.  Section~\ref{example}  illustrates the procedures through an example. Finally, Section~\ref{conclusions} concludes the paper, and presents avenues for future research.

\section{Preliminaries and Terminology}\label{preresults}

The following standard terminology and notions from graph theory can be found, for instance in \cite{PequitoJournal}. Let $\mathcal D(\bar A)=(\mathcal X,\mathcal E_{\mathcal X,\mathcal X})$ be the digraph representation of $\bar{A}\in\{0,1\}^{n\times n}$, to be referred to as the  \emph{state digraph}, where the vertex set $\mathcal X$ represents the set of state variables (also referred to as state vertices) and $\mathcal E_{\mathcal X,\mathcal X}=\{(x_i,x_j): \ \bar A_{ji}\neq 0\}$ denotes the set of edges.  Similarly, given $\bar B\in \{0,1\}^{n\times p}$,  we define the  digraph $\mathcal D(\bar A,\bar B)=(\mathcal X\cup \mathcal U,\mathcal E_{\mathcal X,\mathcal X}\cup\mathcal E_{\mathcal U,\mathcal X})$, to be referred to as \emph{system digraph}, where $\mathcal U$ represents the set of input vertices and $\mathcal E_{\mathcal U,\mathcal X}=\{(u_i,x_j):\ \bar B_{ji}\neq 0\}$. Further, by similarity, we have the \emph{state-slack digraph} given by $\mathcal D(\bar A,\bar S)=(\mathcal X\cup\mathcal S,\mathcal E_{\mathcal X,\mathcal X}\cup \mathcal E_{\mathcal S,\mathcal X})$, where $\mathcal S$ represents the set of  slack variables (or vertices).  In addition, given  digraphs  $\mathcal D(\bar A,\bar B)$ and $\mathcal D(\bar A,\bar S)$, we say that they are \emph{isomorphic} to each other, if  there exists a bijective relationship between the vertices and edges of the digraphs that preserves the incidence relation. Finally, since the edges are directed, an edge is said to be an \emph{ outgoing edge} from a vertex $v$ if it starts in $v$, and, similarly, is said to be an \emph{incoming edge}  to $w$ if it ends on $w$. 

%
%  In general, given matrices $L\in\mathbb{R}^{n\times n}$ and $P\in\mathbb{R}^{n\times m}$, we denote by $\mathcal D([L \ P])$ the digraph $\mathcal D([L \ P])=(\mathcal L\cup \mathcal P,\mathcal E_{\mathcal L,\mathcal L}\cup\mathcal E_{\mathcal P,\mathcal L})$, where  $[L \ P]$ is the concatenated matrix formed by $L$ and $P$, $\mathcal L=\{l_1,\cdots, l_n\}$ are the vertices (or variables) associated with the row and column indices of $L$ and $\mathcal P=\{p_1,\cdots, p_m\}$ the vertices (or variables) associated with the column indices of $P$; in addition, $\mathcal E_{\mathcal L,\mathcal L}=\{(l_i,l_j) \in \mathcal L\times \mathcal L: \ L_{j,i}\neq 0\}$ represents the edges between vertices in $\mathcal L$, and $\mathcal E_{\mathcal P,\mathcal L}=\{(p_i,l_j) \in \mathcal P\times \mathcal L: \ P_{j,i}\neq 0\}$  denotes the set of edges  from the vertices in $\mathcal P$ to those in $\mathcal L$. In particular,  the digraph $\mathcal D(\bar A,\bar B)$ is the same as $\mathcal D([\bar A \ \bar B])$.  Hereafter, since the edges are directed, an edge is said to be an \emph{ outgoing edge} from a vertex $v$ if it starts in $v$, and, similarly, is said to be an \emph{incoming edge}  to $w$ if it ends on $w$. Finally, given any matrix $H$ with the same dimension and structure as $[\bar A \ \bar B]$,   the corresponding digraphs  $\mathcal D(\bar A,\bar B)$ and $\mathcal D(H)$ are  said to be \emph{isomorphic} to each other, if  there exists a bijective relationship between the vertices and edges of the digraphs that preserves the incidence relation.
%

\begin{figure}[!h]
\centering
\includegraphics[scale=0.35]{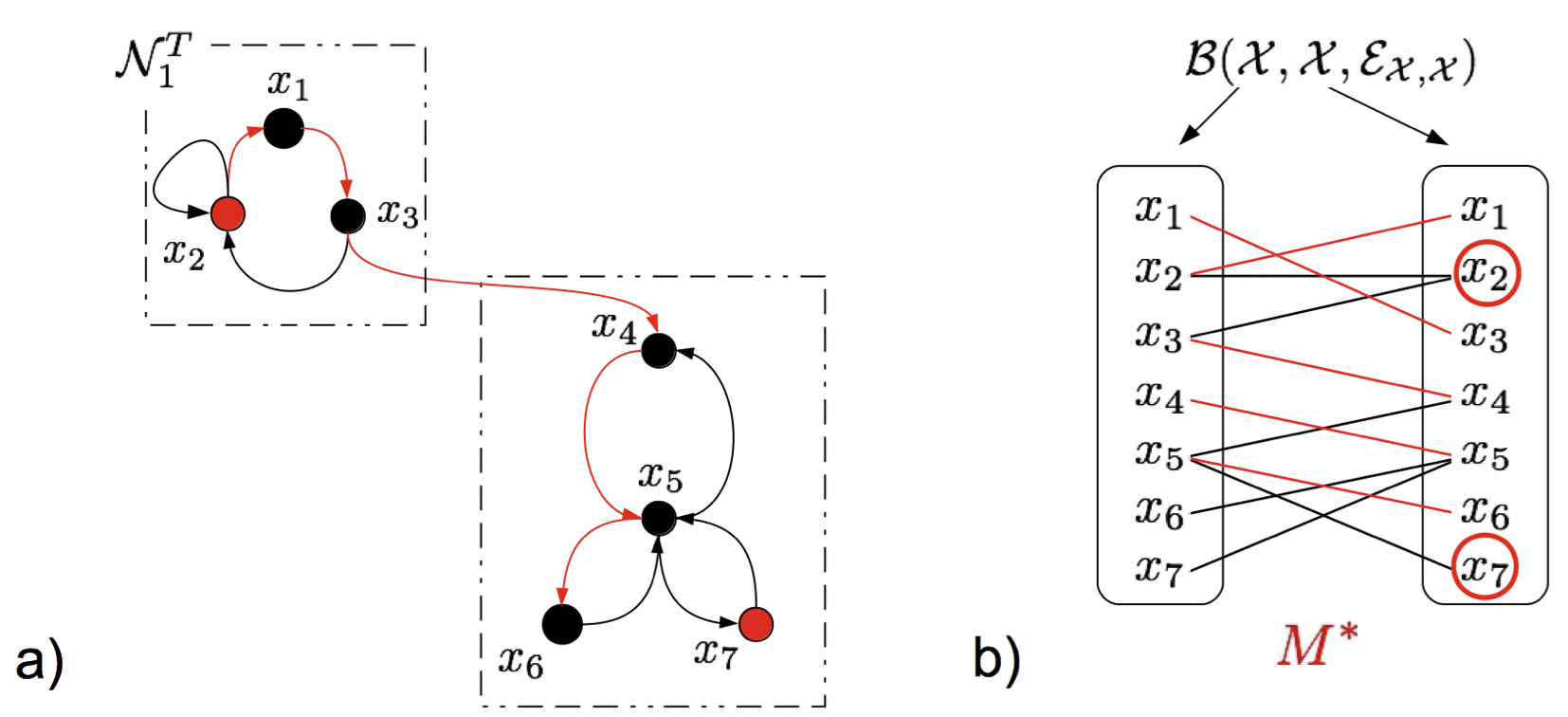}
\caption{ In a) an illustrative example of a  digraph $\mathcal D(\bar A)$ is provided, where the SCCs are depicted in the dashed boxes and the non-top linked SCC (only one) labeled by $\mathcal N^T_1$. The red edges correspond to the edges in a possible maximum matching $M^*$ of the bipartite graph $\mathcal B(\bar A)$, presented in b). Moreover, $x_2,x_7$ are right-unmatched vertices (they do not belong to matching edges) w.r.t. $M^*$ and are depicted by red circles. In fact,  $x_2$ plays \emph{double role}, i.e.,  is a state variable that also belongs to (the only) non-top linked SCC. Finally, by Theorem~\ref{FDIC}, we obtain that  $\{x_2,x_7\}$ is a feasible dedicated input configuration. Hence, the assignment of two distinct inputs to those state variables ensures structural controllability of the system $\mathcal D(\bar A,\bar B)$. Further, it is easy to see that any superset of a feasible dedicated input configuration is also a feasible dedicated input configuration.}
\label{fig:example}
\end{figure}

In addition, we will use the following graph theoretic notions~\cite{Cormen}: A digraph $\mathcal{D}_s=(\mathcal V_s,\mathcal E_s)$ with $\mathcal V_s\subset\mathcal  V$ and $\mathcal E_s\subset \mathcal E$ is called a \textit{subgraph} of $\mathcal{D}=(\mathcal V,\mathcal E)$. A sequence of edges $\{(v_1,v_2),(v_2,v_3),$ $\cdots,(v_{k-1},v_k)\}$, in which all the vertices are distinct, is called \textit{an  elementary path} from $v_1$ to $v_k$. A vertex with an edge to itself (i.e., a \emph{self-loop}), or an elementary path from $v_1$ to $v_k$  together with an additional edge $(v_k,v_1)$, is called a \emph{cycle}.  A digraph $\mathcal{D}$ is said to be strongly connected if there exists a directed path between any two pairs of vertices. A \emph{strongly connected component} (SCC) is a maximal subgraph $\mathcal{D}_s=(\mathcal V_s,\mathcal E_s)$ of $\mathcal{D}$ such that for every $v,w \in \mathcal V_s$ there exists a path from $v$ to $w$. Notice that the SCCs are uniquely defined for a given digraph; consequently, visualizing each SCC as a virtual node (or supernode), we can generate a \textit{directed acyclic graph} (DAG), in which each node corresponds to a single SCC and there exists  a directed edge between two virtual nodes \emph{if and only if} there exists a directed edge connecting vertices within the  corresponding SCCs in the original digraph. The DAG associated with $\mathcal{D}=(\mathcal V,\mathcal E)$ can be efficiently generated in $\mathcal{O}(|\mathcal V|+|\mathcal E|)$~\cite{Cormen}, where  $|\mathcal V|$ and $|\mathcal E|$  denote the number of vertices in $\mathcal V$ and the number of edges in $\mathcal E$, respectively. In the DAG representation, an SCC (a supernode) that has no incoming edge from any state in a different SCC (supernode) is referred to  as  a \emph{non-top linked SCC}, since, by convention, the DAG is graphically represented with edges between the virtual nodes drawn downwards,   an example is depicted in Figure~\ref{fig:example}-a).

For any two vertex sets $\mathcal S_{1}, \mathcal S_{2}\subset \mathcal V$, we define the   \textit{bipartite graph} $\mathcal{B}(\mathcal S_1,\mathcal S_2,\mathcal E_{\mathcal S_1,\mathcal S_2})$, as a  graph (bipartite), whose vertex set is given by $\mathcal S_{1}\cup \mathcal S_{2}$ and the edge set 
$
\mathcal E_{S_1,S_2}\subseteq \{(s_1,s_2)\in \mathcal E \ :\ s_1 \in \mathcal S_1, s_2 \in\mathcal  S_2  \ \}.
$ 
Given $\mathcal{B}(\mathcal S_1,\mathcal S_2,\mathcal E_{\mathcal S_1,\mathcal S_2})$, a matching $M$ corresponds to a subset of edges in $\mathcal E_{\mathcal S_1,\mathcal S_2}$ that do not share vertices, i.e., given edges  $e=(s_1,s_2)$ and $e'=(s_1',s_2')$ with $s_1,s_1' \in \mathcal S_1$ and $s_2,s_2'\in \mathcal S_2$, $e, e' \in M$ only if $s_1\neq s_1'$ and $s_2\neq s_2'$.  A bipartite graph is, by convention,  depicted by a set of vertices $\mathcal S_1$ in the left and the other set of vertices $\mathcal S_2$ in the right to clearly emphasize the bipartition,  an example is depicted in Figure~\ref{fig:example}-b). The vertices in $\mathcal S_1$ and $\mathcal S_2$ are \textit{matched vertices} if they belong to an edge in the  matching $M$, otherwise, we designate the vertices as \textit{unmatched vertices}.  A maximum matching $M^{\ast}$ is a matching $M$ that has the largest number of edges among all possible matchings. It is to be noted  that a maximum matching $M^*$ may not be unique.  For ease of referencing, keeping in mind the bipartite graphical representation,  the term \emph{right-unmatched vertices}, with respect to (w.r.t.) $\mathcal{B}(\mathcal S_1,\mathcal S_2,\mathcal E_{\mathcal S_1,\mathcal S_2})$ and a  matching $M$ (not necessarily maximum), will refer to  those vertices in $\mathcal S_{2}$ that do not belong to a matching edge in $M$, and are denoted by $\mathcal U_R(M)$.  In addition, we introduce the following notation: given a set of edges $\mathcal{E}_{\mathcal S_1,\mathcal S_2}$, we denote by $\mathcal L(\mathcal{E}_{\mathcal S_1,\mathcal S_2})$ and $\mathcal R(\mathcal{E}_{\mathcal S_1,\mathcal S_2})$ the collection of vertices corresponding to the set of left and right endpoints of $\mathcal E_{\mathcal S_1,\mathcal S_2}$, i.e., in $\mathcal S_1$ and $\mathcal S_2$, respectively.

Now, we present some specific bipartite graphs that are closely related with the digraphs previously introduced. More precisely, we have: (i) the \emph{state bipartite graph} $\mathcal B(\bar A)=\mathcal B(\mathcal X,\mathcal X,\mathcal E_{\mathcal X,\mathcal X})$ that we often refer to as the bipartite representation of (or associated with, or induced by) the state digraph $\mathcal D(\bar A)$; (ii) the \emph{system bipartite graph} $\mathcal B(\bar A,\bar B)=\mathcal B(\mathcal X\cup \mathcal U,\mathcal X,\mathcal E_{\mathcal X,\mathcal X}\cup\mathcal E_{\mathcal U,\mathcal X})$ that we often refer to as the bipartite representation of  $\mathcal D(\bar A,\bar B)$; and, similarly to the latter, we have (iii)  the \emph{state-slack bipartite graph} $\mathcal B(\bar A , \bar S)=\mathcal B(\mathcal X\cup \mathcal S,\mathcal X,\mathcal E_{\mathcal X,\mathcal X}\cup\mathcal E_{\mathcal S,\mathcal X})$ that we  often refer to as the bipartite representation of  the state-slack digraph $\mathcal D(\bar A,\bar S)$. 

%Furthermore, given two bipartite graphs, we say that they are \emph{isomorphic}, if there exists a bijective relationship between the vertices and edges of the graphs. For example, $\mathcal B([\bar A \ \bar B])$ is isomorphic to $\mathcal B([L \ P])$, if the structure of $\bar A$ is the same as $L$, and the structure of $\bar B$ is the same as $P$.  Finally, we notice that  two digraphs are isomorphic if and only if the associated bipartite graphs are isomorphic. 

 If we associate   \emph{weights} (or costs) with  the edges in a digraph and bipartite graph, we obtain a  \emph{weighted digraph} and \emph{weighted bipartite graph}, respectively. A weighted digraph is represented by the pair digraph-weight given by $(\mathcal D=(\mathcal V,\mathcal E);w)$, where $w:\mathcal E\rightarrow \mathbb{R}^+_0\cup\{\infty\}$ is the weight function. Similarly, a weighted bipartite graph is represented by the pair bipartite-weight $(\mathcal B(\mathcal S_1,\mathcal S_2,\mathcal E_{\mathcal S_1,\mathcal S_2});w)$. 
% 
% for $e\equiv (v_i,v_j)$ we have $w(e)=H_{ji}$ when $e\in \mathcal E$ and $w(e)=\infty$ otherwise. The weighted bipartite graph $\mathcal B_w(H)=\mathcal B(\mathcal S_1,\mathcal S_2,\mathcal E_{\mathcal S_1,\mathcal S_2}; w)$ is similarly defined.  In the sequel, we will relate the bipartite graph $\mathcal B(H)$ and a weighted bipartite graph $\mathcal B(\mathcal C_H)$, where $\mathcal C_H$ is a cost matrix with the same dimensions as $H$ and where $[\mathcal C_H]_{ij}=\infty$ if $H_{ij}=0$; in other words, an entry in $\mathcal C_H$ that equals to infinity corresponds to a non-existing edge in $\mathcal B_w(\mathcal C_H)$. More precisely, we relate the matchings $M$  of $\mathcal B(H)$ and $M_w$ of $\mathcal B_w(\mathcal C_H)$, as follows: $(M,M_w)$ is a \emph{conjugate pair} of matchings if $M=\{(v_i,v_j): (i,j)\in \mathcal I\}$, $M_w=\{(v'_k,v'_l): (k,l)\in \mathcal I'\}$, where $w((v'_k,v'_l))\neq \infty$ for all $(k,l)\in \mathcal I'$, and $\mathcal I=\mathcal I'$. In other words, the edges in each maching are associated with the same entries in both $H$ and $\mathcal C_H$, and none of such entries in $\mathcal C_H$ is equal to infinity.
%
 Subsequently, we  introduce the  \emph{minimum weight maximum matching} problem. This problem consists in determining the  maximum matching of a weighted bipartite graph $(\mathcal B(\mathcal S_1,\mathcal S_2,\mathcal E_{\mathcal S_1,\mathcal S_2});w)$ that incurs the  minimum weight-sum of its edges; in other words, determining the maximum matching $M^c$ such that
\[
M^c=\arg\min_{M\in \mathcal M} \sum_{e \in M} w(e),
\]
where $\mathcal M$ is the set of all maximum matchings of $\mathcal B(\mathcal S_1,\mathcal S_2,\mathcal E_{\mathcal S_1,\mathcal S_2})$. This problem can be efficiently solved using, for instance,  the Hungarian algorithm~\cite{Munkres1957},  with computational complexity of $\mathcal O( \max\{|\mathcal S_1|,|\mathcal S_2|\}^3)$.  

%{\color{red} Isto esta bem definido? E se não houverem variáveis slack... etc.. o que queres dizer com maximum matching aqui? Notice que não e o mesmo de so ter o state bipartite graph}

We will also require the following general results on structural control design from \cite{PequitoJournal} (see also \cite{Pequito2013}). We define a \emph{feasible dedicated input configuration} to be a collection of state variables to which by assigning dedicated inputs  we can  ensure structural controllability of the system. Consequently,  a minimal feasible dedicated input configuration is the minimal subset of state variables to which we need to assign dedicated inputs  to ensure structural controllability. Further, the feasible dedicated input configurations can be characterized as follows.
\vspace{0.4cm}

\begin{theorem}[\textit{Theorem 3 in} \cite{PequitoJournal}]\label{FDIC}
Let $\mathcal D(\bar A)=(\mathcal X,\mathcal E_{\mathcal X,\mathcal X})$ denote the system digraph and $\mathcal B(\bar A) \equiv \mathcal B(\mathcal X,\mathcal X,\mathcal E_{\mathcal X,\mathcal X})$  the associated {state bipartite graph}. Let $\mathcal S_u\subset \mathcal X$, then the following statements are equivalent:
\begin{enumerate}
\item The set  $\mathcal S_u$ is a feasible dedicated input configuration;
\item There exists a subset $\mathcal U_R(M^*)\subset \mathcal S_u$ corresponding to the set of right-unmatched vertices of some maximum matching $M^*$ of $\mathcal B(\bar A) $, and  a subset $\mathcal A_u\subset \mathcal S_u$ comprising one state variable from each non-top linked SCC of $\mathcal D(\bar A)$.\hfill $\diamond$
\end{enumerate}
\end{theorem}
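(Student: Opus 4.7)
The plan is to prove both directions by appealing to a classical bipartite/graph-theoretic characterization of structural controllability, namely that $(\bar A,\bar B)$ is structurally controllable if and only if (i) every state vertex is reachable from some input in $\mathcal D(\bar A,\bar B)$ (accessibility), and (ii) the bipartite graph $\mathcal B(\bar A,\bar B)$ (with right vertex set $\mathcal X$ and left vertex set $\mathcal X\cup\mathcal U$) admits a matching that saturates every vertex in $\mathcal X$ (no-dilation). I will translate each condition into a constraint on the dedicated configuration $\mathcal S_u$, yielding exactly the two requirements in item (2).

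For the direction (1) $\Rightarrow$ (2), I assume $\mathcal S_u$ is a feasible dedicated input configuration and let $\bar B$ be the corresponding dedicated input matrix. From the accessibility condition, every non-top linked SCC $\mathcal N$ must contain at least one vertex of $\mathcal S_u$: otherwise, by definition of non-top linked SCC, there is no incoming edge into $\mathcal N$ from any other SCC nor from any input vertex, so no vertex in $\mathcal N$ could be input-reachable. This produces the subset $\mathcal A_u$. From the no-dilation condition, $\mathcal B(\bar A,\bar B)$ has a matching $\tilde M$ saturating $\mathcal X$; restricting $\tilde M$ to edges with both endpoints in $\mathcal X$ yields a matching $M^*$ of $\mathcal B(\bar A)$ whose right-unmatched vertices are exactly those vertices of $\mathcal X$ matched in $\tilde M$ by edges coming from $\mathcal U$. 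Since inputs are dedicated, those left endpoints lie in $\mathcal S_u$, and hence $\mathcal U_R(M^*)\subset \mathcal S_u$. A standard alternating-path argument (any matching $M^*$ obtained this way has the same cardinality as a maximum matching of $\mathcal B(\bar A)$, because otherwise one could augment and reduce the number of inputs needed, contradicting the saturating property) will close this direction.

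For the direction (2) $\Rightarrow$ (1), I construct the dedicated input matrix $\bar B$ placing one dedicated input on each vertex of $\mathcal S_u$ and verify both Lin conditions. No-dilation is immediate: take $M^*$ together with the dedicated-input edges $(u,x)$ for each $x\in\mathcal U_R(M^*)\subset \mathcal S_u$; these edges are vertex-disjoint from $M^*$ and together saturate $\mathcal X$, producing a matching of $\mathcal B(\bar A,\bar B)$ that covers every right vertex. For accessibility, I note that in the DAG of SCCs of $\mathcal D(\bar A)$, every SCC has a directed path from at least one non-top linked SCC (follow incoming edges backward until no further progress is possible); since $\mathcal A_u$ hits every non-top linked SCC and those vertices are directly actuated, every state vertex is input-reachable.

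The main obstacle will be rigorously coupling the bipartite and digraph viewpoints in direction (1) $\Rightarrow$ (2): specifically, showing that one can always choose $M^*$ to be a \emph{maximum} matching of $\mathcal B(\bar A)$ with $\mathcal U_R(M^*)\subset \mathcal S_u$ (rather than some non-maximum matching). This requires exploiting that the size of a maximum matching of $\mathcal B(\bar A)$ equals $n$ minus the minimum number of dedicated inputs ever required to meet the no-dilation condition, together with an exchange/augmenting-path argument to show that if $\mathcal S_u$ works for some matching, it works for a maximum one. The accessibility argument, as well as the matching extension in the converse direction, are then largely routine given the DAG-of-SCC structure and the construction of $\bar B$ from $\mathcal S_u$.
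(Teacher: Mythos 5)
The paper does not actually prove this statement: it is imported verbatim as Theorem~3 of \cite{PequitoJournal}, so there is no in-paper proof to compare against. Your route --- Lin's accessibility/no-dilation characterization, with no-dilation recast as the existence of a matching of $\mathcal B(\bar A,\bar B)$ saturating $\mathcal X$ --- is the standard one and is essentially how the cited reference argues; both directions are sound in outline, and the converse direction and the accessibility arguments are correct as written. One caveat in the forward direction: your parenthetical claim that the restriction $M^*$ of $\tilde M$ to $\mathcal E_{\mathcal X,\mathcal X}$ ``has the same cardinality as a maximum matching of $\mathcal B(\bar A)$'' is false in general (take $\bar A$ with a self-loop at every vertex, $\mathcal S_u=\mathcal X$, and $\tilde M$ consisting entirely of input edges; then $M^*=\emptyset$ while the maximum matching of $\mathcal B(\bar A)$ is perfect). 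What you actually need --- and what you correctly flag as the main obstacle --- is the weaker fact that for any matching $M$ of a bipartite graph there exists a maximum matching $M'$ with $\mathcal U_R(M')\subseteq\mathcal U_R(M)$: augment repeatedly, and note that each augmenting path terminates at a right-unmatched vertex, so each augmentation strictly shrinks the right-unmatched set without ever adding to it. Applying this to $M^*$ yields a maximum matching whose right-unmatched set is contained in $\mathcal U_R(M^*)\subseteq\mathcal S_u$, which is exactly what item (2) requires (this is the same fact the present paper invokes, citing \cite{PequitoJournal}, inside the proof of its Lemma~2). With that substitution your argument is complete.
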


Observe that a state variable can be simultaneously in $\mathcal U_R(M^*)$ and $\mathcal A_u$, even if these sets correspond to those of a minimal feasible dedicated input configuration; thus,   motivating us to refer to those variables as playing a \emph{double role}, since they contribute to both the conditions in Theorem~\ref{FDIC}. To illustrate Theorem \ref{FDIC}, we provide an example in  Figure~\ref{fig:example}.

\vspace{0.4cm}

\begin{remark}\label{GeneralIOsel} In \cite{PequitoJournal,Pequito2013} general results were given on structural input selection, in particular on non-dedicated structural input design, i.e., in which the structural input matrix $\bar{B}$ may possess multiple nonzero entries in each column.  To ease the presentation, we denote by $m$ the number of right-unmatched vertices in any maximum matching of $\mathcal{B}(\bar A)$ and by $\beta$ the number of non-top linked SCCs in $\mathcal{D}(\bar{A})$. The following characterization of structural controllability was obtained in [21,22] (see Theorem 8 in \cite{PequitoJournal}): a pair $(\bar{A},\bar{B})$ is structurally controllable if and only if there exists a maximum matching of $\mathcal{B}(\bar A)$ with a set of right-unmatched vertices $\mathcal{U}_{R}$, such that, $\bar{B}$ has (at least) $m$ nonzero entries, one in each of the  rows corresponding  to the different state variables in $\mathcal{U}_{R}$ and located at different columns, and (at least) $\beta$ nonzero entries, each of which belongs to a row (state variable) corresponding to a distinct non-top linked SCC and located in arbitrary columns.\hfill $\diamond$
\end{remark}

Notice that as a direct consequence of Remark \ref{GeneralIOsel}, we obtain that any $\bar{B}$, such that $(\bar{A},\bar{B})$ is structurally controllable, must have at least $m$ distinct nonzero columns (or $m$ distinct control inputs).

\section{Main Results}\label{mainresults}

Despite  the fact that problems $\mathcal P_1$ and $\mathcal P_2$ seem to be highly combinatorial, hereafter we show that they can  be solved using polynomial complexity (in the dimension of the state space) algorithms. To obtain those results, we first present some  intermediate results:  (i) we characterize the  matchings that the bipartite graphs used in the sequel can have (Lemma~\ref{lemma1} and Lemma~\ref{maxMatAugDigraph}), and (ii) we characterize the minimum weight maximum matchings that a weighted bipartite graph can have, upon a specific cost structure to be used to solve and characterize the solutions to $\mathcal P_1$ and $\mathcal P_2$, see Lemma~\ref{findUnmatched} and Lemma~\ref{reductionMat}.  Subsequently,  we initially   address $\mathcal P_1$ by reducing it to a minimum weight maximum matching problem, and present the solution in Algorithm~\ref{solP1}. Then, motivated by  the solution to $\mathcal P_1$, we provide a reduction of $\mathcal P_2$ to another minimum weight maximum matching problem, described in Algorithm~\ref{solP2}. The correctness and computational complexity proofs of Algorithm~\ref{solP1} and Algorithm~\ref{solP2} are presented in Theorem \ref{theorem1costAutomatica} and Theorem \ref{theorem2costAutomatica}, respectively.

\subsection{Intermediate Results}

Let  $\bar{S}$ be a $n\times q$ structural (binary) matrix, and denote by $\mathcal B(\bar A, \bar S )$ the state-slack bipartite graph  associated with the digraph $\mathcal D(\bar A,\bar S )$. Note, by construction, the state-slack digraph $\mathcal{D}(\bar A, \bar S)$ consists of $n+q$ vertices, where the $q$ additional vertices (in comparison with the state  digraph $\mathcal{D}(\bar A)$) correspond  to the \emph{slack} variables, introduced by $\bar S$.
% In particular, notice that if $\bar S$ was the input matrix $\bar B$, the slack variables would correspond to the input variables.
 Further,  by construction, the slack variables  only have  outgoing edges (associated with the nonzero entries of $\bar S$) to the state  variables in $\mathcal{D}(\bar A,\bar S)$; in other words,  there are no incoming edges into the slack variables.  We start by  relating maximum matchings of the two bipartite graphs $\mathcal{B}(\bar{A})$ and $\mathcal{B}(\bar A,  \bar S)$, where, note that, the former is a subgraph of the latter. This will also help in obtaining better insight and better understanding of  the  properties of the maximum matchings of the different bipartite graphs.

\vspace{0.4cm}

\begin{lemma}
Let $\mathcal B(\bar A, \bar S)=(\mathcal X\cup \mathcal S,\mathcal X,\mathcal E_{\mathcal X,\mathcal X}\cup\mathcal E_{\mathcal S,\mathcal X})$ be the state-slack bipartite graph, $\mathcal B(\bar A)=\mathcal B(\mathcal X,\mathcal X,\mathcal E_{\mathcal X,\mathcal X})$ and $\mathcal B(\bar S)=\mathcal B(\mathcal S,\mathcal X,\mathcal E_{\mathcal S,\mathcal X})$. The following statements hold:
\begin{enumerate}
\item If $M_{\bar A}$ and $M_{\bar S}$ are matchings of $\mathcal B(\bar A)$ and $\mathcal B(\bar S)$ respectively, and $\mathcal R(M_{\bar A})\cap \mathcal R(M_{\bar S})=\emptyset$, then $M_{\bar A,\bar S}=M_{\bar S}\cup M_{\bar A}$ is a matching of  $\mathcal B(\bar A, \bar S)=(\mathcal X\cup \mathcal S,\mathcal X,\mathcal E_{\mathcal X,\mathcal X}\cup\mathcal E_{\mathcal S,\mathcal X})$.
\item If $M_{\bar A,\bar S}$ is a matching of  $\mathcal B(\bar A,\bar S)=(\mathcal X\cup \mathcal S,\mathcal X,\mathcal E_{\mathcal X,\mathcal X}\cup\mathcal E_{\mathcal S,\mathcal X})$, then $M_{\bar A,\bar S}=M_{\bar S}\cup M_{\bar A}$, where $M_{\bar A}=M_{\bar A,\bar S}\cap \mathcal E_{\mathcal X,\mathcal X}$ and $M_{\bar S}=M_{\bar A,\bar S}\cap \mathcal E_{\mathcal S,\mathcal X}$ are (disjoint) matchings of $\mathcal B(\bar A)$ and $\mathcal B(\bar S)$, respectively.
\end{enumerate}
In  particular, $\mathcal R(M_{\bar S})\subset \mathcal U_R(M_{\bar A})$, where $ \mathcal U_R(M_{\bar A})$ is the set of right-unmatched vertices associated with the matching $M_{\bar A}$.

\hfill $\diamond$
\label{lemma1}
\end{lemma}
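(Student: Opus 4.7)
\textbf{Proof proposal for Lemma \ref{lemma1}.} The plan is essentially a bookkeeping argument based on the disjointness of the vertex and edge classes defining $\mathcal{B}(\bar A)$ and $\mathcal{B}(\bar S)$. The key structural observation that I would highlight up front is that in $\mathcal{B}(\bar A, \bar S)$ the left vertex set $\mathcal{X}\cup\mathcal{S}$ is a disjoint union, and the edge set is partitioned as $\mathcal{E}_{\mathcal{X},\mathcal{X}}\sqcup\mathcal{E}_{\mathcal{S},\mathcal{X}}$ according to whether the left endpoint lies in $\mathcal{X}$ or in $\mathcal{S}$ (this uses the fact that slack vertices have no incoming edges, so every edge incident to a slack vertex must be one of its outgoing edges and hence lies in $\mathcal{E}_{\mathcal{S},\mathcal{X}}$). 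Consequently, any edge of $\mathcal{B}(\bar A,\bar S)$ belongs to exactly one of $\mathcal{E}_{\mathcal{X},\mathcal{X}}$ or $\mathcal{E}_{\mathcal{S},\mathcal{X}}$, which will make both the union and intersection operations in the statement behave cleanly.

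For item (1), I would verify the matching property of $M_{\bar A,\bar S}=M_{\bar A}\cup M_{\bar S}$ directly by checking that no two of its edges share an endpoint. Two edges both from $M_{\bar A}$ (resp.\ $M_{\bar S}$) share no endpoint because each is individually a matching. For a mixed pair $e\in M_{\bar A}$, $e'\in M_{\bar S}$, the left endpoint of $e$ lies in $\mathcal{X}$ while that of $e'$ lies in $\mathcal{S}$, so left endpoints cannot coincide; the right endpoints both lie in $\mathcal{X}$, and the hypothesis $\mathcal{R}(M_{\bar A})\cap\mathcal{R}(M_{\bar S})=\emptyset$ rules out coincidence there as well.

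For item (2), I would use the edge-class partition above to write $M_{\bar A,\bar S}$ uniquely as the disjoint union $M_{\bar A}\cup M_{\bar S}$ with $M_{\bar A}=M_{\bar A,\bar S}\cap\mathcal{E}_{\mathcal{X},\mathcal{X}}$ and $M_{\bar S}=M_{\bar A,\bar S}\cap\mathcal{E}_{\mathcal{S},\mathcal{X}}$. Then I would observe that any subset of a matching, restricted to a subgraph that contains its edges, is itself a matching of that subgraph; applying this to the inclusions $\mathcal{B}(\bar A)\subset\mathcal{B}(\bar A,\bar S)$ and $\mathcal{B}(\bar S)\subset\mathcal{B}(\bar A,\bar S)$ yields the claim.

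Finally, the ``in particular'' statement $\mathcal{R}(M_{\bar S})\subset \mathcal{U}_R(M_{\bar A})$ follows by the same endpoint-disjointness argument used in (1), applied in reverse: since $M_{\bar A,\bar S}$ (in case (2)) or its construction (in case (1)) is a matching and $M_{\bar A},M_{\bar S}$ both have right endpoints in $\mathcal{X}$, no vertex in $\mathcal{R}(M_{\bar S})$ can simultaneously lie in $\mathcal{R}(M_{\bar A})$; hence every such vertex is right-unmatched with respect to $M_{\bar A}$. I do not anticipate any real obstacle here — the only subtlety is to be explicit about the partition of edges by left-endpoint class (which relies on slack vertices having no incoming edges), because that is what makes both the union in (1) and the intersection in (2) well-defined as matchings of the respective component bipartite graphs.
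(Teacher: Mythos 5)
Your proposal is correct and follows essentially the same route as the paper: item (1) is proved by checking that left endpoints cannot coincide (since $\mathcal X$ and $\mathcal S$ are disjoint) and right endpoints cannot coincide by hypothesis, while item (2) uses the partition of the edge set by left-endpoint class, exactly as in the paper's argument. Your treatment is if anything slightly more explicit than the paper's (in particular on the ``in particular'' claim $\mathcal R(M_{\bar S})\subset\mathcal U_R(M_{\bar A})$, which the paper leaves implicit), and there is no gap.
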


\begin{proof}
The proof of (1) follows by noticing that, by construction of $\mathcal B(\bar A, \bar S)$, we have $\mathcal L(M_{\bar A})\cap \mathcal L(M_{\bar S})=\emptyset$, and by assumption $\mathcal R(M_{\bar A})\cap \mathcal R(M_{\bar S})=\emptyset$, which implies that  $M_{\bar A,\bar S}=M_{\bar S}\cup M_{\bar A}$ has no edge with common end-points; in other words, it is a matching of $\mathcal B(\bar A, \bar S)=(\mathcal X\cup \mathcal S,\mathcal X,\mathcal E_{\mathcal X,\mathcal X}\cup\mathcal E_{\mathcal S,\mathcal X})$, by definition of matching.

On the other hand, the proof of (2) follows by noticing that the edges in $M_{\bar A,\bar S}$ belong to either $\mathcal E_{\mathcal X,\mathcal X}$ or $\mathcal E_{\mathcal S,\mathcal X}$ and noticing  that $M_{\bar A}$ and $M_{\bar S}$ have no common endpoints since $M_{\bar A,\bar S}$ is a matching. Subsequently, it is easy to see that $M_{\bar A}$ and $M_{\bar S}$ are matchings of $\mathcal B(\bar A)$ and $\mathcal B(\bar S)$, respectively. 
\end{proof}

Subsequently, from Lemma~\ref{lemma1}, we can obtain the following result characterizing the maximum matchings of $\mathcal B(\bar A, \bar S)$.
\vspace{0.4cm}

\begin{lemma}\label{maxMatAugDigraph}
Let $\mathcal B(\bar A, \bar S)=(\mathcal X\cup \mathcal S,\mathcal X,\mathcal E_{\mathcal X,\mathcal X}\cup\mathcal E_{\mathcal S,\mathcal X})$ be the state-slack bipartite graph. If $M^*_{\bar A,\bar S}$ is a maximum matching of  $\mathcal B(\bar A, \bar S)=(\mathcal X\cup \mathcal S,\mathcal X,\mathcal E_{\mathcal X,\mathcal X}\cup\mathcal E_{\mathcal S,\mathcal X})$, then $M^*_{\bar A,\bar S}=M_{\bar S}\cup M_{\bar A}$, where $M_{\bar A}=M_{\bar A,\bar S}\cap \mathcal E_{\mathcal X,\mathcal X}$ and $M_{\bar S}=M_{\bar A,\bar S}\cap \mathcal E_{\mathcal S,\mathcal X}$  are (disjoint) matchings of $\mathcal B(\bar A)$ and $\mathcal B(\bar S)$, respectively, and $M_{\bar S}$ contains the largest collection of edges incoming into a set of right-unmatched vertices of some maximum matching of $\mathcal B(\bar A)$.
In  particular, $\mathcal R(M_{\bar S})\subset \mathcal U_R(M_{\bar A})$, where $ \mathcal U_R(M_{\bar A})$ is the set of right-unmatched vertices associated with the (possibly not maximum) matching $M_{\bar A}$.\hfill $\diamond$
 \end{lemma}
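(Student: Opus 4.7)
The plan is to deduce the decomposition directly from Lemma~\ref{lemma1} and then establish the extremality statement about $M_{\bar S}$ by a simple exchange argument leveraging the assumed maximality of $M^{*}_{\bar A,\bar S}$.

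First, since $M^{*}_{\bar A,\bar S}$ is in particular a matching of $\mathcal B(\bar A,\bar S)$, part (2) of Lemma~\ref{lemma1}, applied with $M_{\bar A,\bar S}:=M^{*}_{\bar A,\bar S}$, immediately yields the edge-disjoint decomposition $M^{*}_{\bar A,\bar S}=M_{\bar A}\cup M_{\bar S}$ into matchings $M_{\bar A}$ of $\mathcal B(\bar A)$ and $M_{\bar S}$ of $\mathcal B(\bar S)$, together with the ``in particular'' inclusion $\mathcal R(M_{\bar S})\subset \mathcal U_R(M_{\bar A})$. Note that this step uses no hypothesis beyond $M^{*}_{\bar A,\bar S}$ being a matching of $\mathcal B(\bar A,\bar S)$.

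For the extremality claim, I would fix an arbitrary maximum matching $M^{\max}_{\bar A}$ of $\mathcal B(\bar A)$ and an arbitrary matching $N$ of $\mathcal B(\bar S)$ with $\mathcal R(N)\subset \mathcal U_R(M^{\max}_{\bar A})$. By part (1) of Lemma~\ref{lemma1}, $M^{\max}_{\bar A}\cup N$ is a matching of $\mathcal B(\bar A,\bar S)$, and the assumed maximality of $M^{*}_{\bar A,\bar S}$ gives
\[
|M^{\max}_{\bar A}|+|N|\;\le\;|M^{*}_{\bar A,\bar S}|\;=\;|M_{\bar A}|+|M_{\bar S}|.
\]
Combining this with $|M^{\max}_{\bar A}|\ge|M_{\bar A}|$, which holds because $M^{\max}_{\bar A}$ is a maximum matching of $\mathcal B(\bar A)$, yields $|N|\le|M_{\bar S}|$. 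Maximizing over admissible pairs $(M^{\max}_{\bar A},N)$ then shows that $M_{\bar S}$ is at least as large as any collection of edges of $\mathcal E_{\mathcal S,\mathcal X}$ that forms a matching incoming into the right-unmatched set of some maximum matching of $\mathcal B(\bar A)$, which is the extremality asserted in the lemma.

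The main delicacy I anticipate lies in reconciling the phrase ``right-unmatched vertices of some maximum matching of $\mathcal B(\bar A)$'' with the fact that the extracted $M_{\bar A}$ itself need not be a maximum matching of $\mathcal B(\bar A)$: the cardinality comparison above is clean, but identifying a literal sub-collection of $M_{\bar S}$ that realizes the extremum requires augmenting $M_{\bar A}$ along alternating paths in the symmetric difference $M_{\bar A}\triangle M^{\max}_{\bar A}$, in order to shift the right-unmatched support of $M_{\bar A}$ onto that of a maximum matching of $\mathcal B(\bar A)$. No such adjustment can enlarge $|M_{\bar S}|$, by the same exchange argument against the maximality of $M^{*}_{\bar A,\bar S}$, so the extremal witness is ultimately secured.
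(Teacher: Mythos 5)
Your proposal is correct and follows essentially the same route as the paper: the decomposition and the inclusion $\mathcal R(M_{\bar S})\subset\mathcal U_R(M_{\bar A})$ come straight from Lemma~\ref{lemma1}(2), and the extremality of $M_{\bar S}$ is obtained by pitting a competitor matching $M^{\max}_{\bar A}\cup N$ (built via Lemma~\ref{lemma1}(1)) against the maximality of $M^*_{\bar A,\bar S}$. Your explicit cardinality chain $|M^{\max}_{\bar A}|+|N|\le|M_{\bar A}|+|M_{\bar S}|$ with $|M^{\max}_{\bar A}|\ge|M_{\bar A}|$ is in fact a cleaner rendering of the paper's contradiction argument, and the delicacy you flag (that $M_{\bar A}$ need not itself be maximum) is one the paper passes over by citing the containment property of right-unmatched sets, so you are at least as rigorous on that point.
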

\begin{proof}
From Lemma~\ref{lemma1}-(2),  we obtain that  $M_{\bar A}$ and $M_{\bar S}$ are (disjoint) matchings of $\mathcal B(\bar A)$ and $\mathcal B(\bar S)$, respectively. Now, recall that any set of right-unmatched vertices $\mathcal U_R$ associated with a  matching of a bipartite graph comprises a set of right-unmatched vertices $\mathcal U_R^*$ associated with a maximum matching of that bipartite graph~\cite{PequitoJournal}. Next, given that $M^*_{\bar A,\bar S}$ is a maximum matching of $\mathcal B(\bar A, \bar S)$, it follows that $\mathcal U_R(M^*_{\bar A,\bar S})$ comprises the lowest possible number of right-unmatched vertices. Now, to establish that  $M_{\bar S}$ contains  the largest collection of edges incoming into a set of right-unmatched vertices of a maximum matching of $\mathcal B(\bar A)$, suppose  by contradiction, that this is not the case. Then,   there exists at least one more right-unmatched vertex in the set of right-unmatched vertices associated with a matching $M'$ of  $\mathcal B(\bar A, \bar S)$  than in the set of right-unmatched vertices associated with a maximum matching $M^*_{\bar A,\bar S}$; hence, $M'$ cannot be a maximum matching. 
\end{proof}

\begin{figure}[!h]
\centering
\includegraphics[scale=0.55]{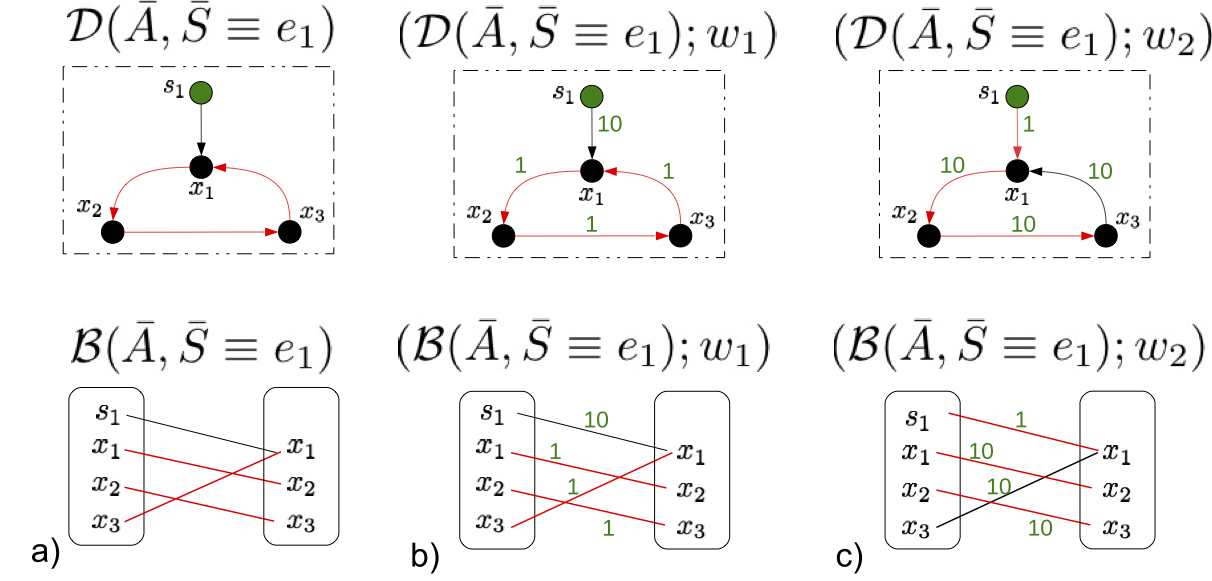}
\caption{  In a) we have a state digraph $\mathcal D(\bar A)$ and its associated state bipartite graph $\mathcal B(\bar A)$, where the edges in the  maximum matching of  the former are depicted in red. By adding a slack variable $s_1$ (corresponding to having the slack matrix equal to the canonical vector $e_1$ with first entry equal to $1$ and zero elsewhere)   depicted by the green vertex, and associating weights with the edges of the extended digraph, we obtain $(\mathcal D(\bar A,\bar S\equiv e_1);w_1)$ and $(\mathcal D(\bar A,\bar S\equiv e_1);w_2)$  depicted in   b) and c), respectively. The new digraph has two possible maximum matchings (the associated bipartite graph), with edges that are depicted in red, shown in b) and c). In addition,  if we consider the minimum weight maximum matching, then b) corresponds to the case where the weight of the edge from the slack variable to $x_1$ is larger than the weight of the edge from $x_3$ to $x_1$; whereas, c) provides the alternative scenario, where edges outgoing from slack variables are preferred to those between state variables. }
\label{fig:MatchingTricks}
\end{figure}

We now extend the results of Lemma~\ref{lemma1} and Lemma~\ref{maxMatAugDigraph} to  weighted bipartite graphs. Consequently, when solving the minimum weight maximum matching, the maximum matching (characterized in  Lemma~\ref{maxMatAugDigraph}) obtained depends on the weights assigned. For example, as depicted in Figure~\ref{fig:MatchingTricks}-c),  if the weights of the edges between state variables in the bipartite graph are larger than the weight of the edge outgoing from the slack variable, then the edge outgoing from the slack variable belongs to the minimum weight maximum matching of the  weighted state-slack  bipartite graph. Alternatively, if the weight of the edge outgoing from the slack variable is larger  than the weights of the edges between state variables, then the minimum weight maximum matching of the weighted state-slack  bipartite graph  comprises the largest set of edges between state variables; in particular, in  Figure~\ref{fig:MatchingTricks}-b) a maximum matching consists of edges between state variables only. The same reasoning can be readily applied if several slack variables are considered, as formally stated next. 

\vspace{0.4cm}

\begin{lemma}\label{findUnmatched} Let  $\bar A\in \{0,1\}^{n \times n}$ and $\bar S\in\{0,1\}^{n\times p}$ with $p\le n$. Consider the weighted state-slack bipartite graph $(\mathcal B(\bar A,\bar S);w)$, where $\mathcal B(\bar A,\bar S)=\big (\mathcal X\cup \mathcal S,\mathcal X,\mathcal E \equiv (\mathcal E_{\mathcal X,\mathcal X}\cup \mathcal E_{ \mathcal S,\mathcal X})\big)$, and $w:\mathcal E\rightarrow \mathbb{R}^+_0\cup\{\infty\}$ such that  $w(e_{\bar S})> w(e_{\bar A})=c_{\bar A}\in\mathbb{R}^+$, with $e_{\bar S}\in \mathcal E_{\mathcal S,\mathcal X}$ and $e_{\bar A}\in \mathcal E_{\mathcal X,\mathcal X}$.
A minimum weighted maximum matching $M^*_{\bar A,\bar S}$ of $(\mathcal B(\bar A, \bar S); w)$ is given by
\[
M^*_{\bar A,\bar S}=M^*_{\bar A}\cup\mathcal E_{\bar S}^*,
\]
where $\mathcal E^*_{\bar S}$ consists in the largest collection of edges incoming into a set of right-unmatched vertices associated with a maximum matching $M^*_{\bar A}$ of $\mathcal B(\bar A)$ and such that  $\mathcal E^*_{\bar S}$  incurs in the  lowest weight-sum among all possible collection of edges incoming into a set of right-unmatched vertices associated with a maximum matching  of $\mathcal B(\bar A)$.
\hfill $\diamond$
\end{lemma}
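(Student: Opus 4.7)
The plan is to combine the structural decomposition from Lemma~\ref{maxMatAugDigraph} with a weight-decreasing exchange argument that exploits the strict inequality $w(e_{\bar S}) > c_{\bar A}$. First, by Lemma~\ref{maxMatAugDigraph}, any maximum matching of $\mathcal B(\bar A,\bar S)$ splits (along the edge-set partition $\mathcal E_{\mathcal X,\mathcal X}\cup \mathcal E_{\mathcal S,\mathcal X}$) as $M_{\bar A} \cup M_{\bar S}$, where $M_{\bar A}$ is a matching of $\mathcal B(\bar A)$, $M_{\bar S}$ is a matching of $\mathcal B(\bar S)$, and $\mathcal R(M_{\bar S}) \subseteq \mathcal U_R(M_{\bar A})$. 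It thus suffices to show that, for any minimum-weight maximum matching $M^*_{\bar A,\bar S}$, the state-edge part $M_{\bar A}$ must be a maximum matching of $\mathcal B(\bar A)$, and then to read off the claim for $M_{\bar S}$.

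The central step is this maximality of $M_{\bar A}$. I would argue by contradiction: suppose $|M_{\bar A}|$ is strictly smaller than the matching number of $\mathcal B(\bar A)$. Then there exists an augmenting path $P$ relative to $M_{\bar A}$ in $\mathcal B(\bar A)$, ending at some right vertex $v \in \mathcal X$ that is right-unmatched with respect to $M_{\bar A}$. Augmenting along $P$ produces $M'_{\bar A}$ with $|M'_{\bar A}| = |M_{\bar A}| + 1$, and the only new right vertex it covers is $v$. I then split into two cases. If $v \notin \mathcal R(M_{\bar S})$, then $M'_{\bar A} \cup M_{\bar S}$ is a matching of $\mathcal B(\bar A,\bar S)$ of cardinality $|M^*_{\bar A,\bar S}|+1$, contradicting the maximality of $M^*_{\bar A,\bar S}$. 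Otherwise, the unique slack edge $e \in M_{\bar S}$ incident to $v$ may be discarded, and $M'_{\bar A} \cup (M_{\bar S}\setminus\{e\})$ is another maximum matching whose total weight differs from that of $M^*_{\bar A,\bar S}$ by $+c_{\bar A} - w(e) < 0$ (the net effect along $P$ being $+c_{\bar A}$ of state-edge weight, and removing $e$ costs $-w(e)$), contradicting the minimum-weight assumption.

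Once $M_{\bar A}$ is forced to be a maximum matching of $\mathcal B(\bar A)$, the set $\mathcal U_R(M_{\bar A})$ is fixed, and since $|M_{\bar A}|+|M_{\bar S}|$ must equal the matching number of $\mathcal B(\bar A,\bar S)$, the part $M_{\bar S}$ must realize the largest possible cardinality among matchings of $\mathcal B(\bar S)$ with right endpoints in $\mathcal U_R(M_{\bar A})$. Because all maximum matchings of $\mathcal B(\bar A)$ have the same cardinality and all state edges carry the common weight $c_{\bar A}$, the state contribution to the total weight is the constant $c_{\bar A}\cdot|M_{\bar A}|$. Consequently, minimizing the total weight reduces to picking, across all maximum matchings $M^*_{\bar A}$ of $\mathcal B(\bar A)$, the pair $(M^*_{\bar A},\mathcal E^*_{\bar S})$ whose slack-edge weight-sum is the lowest, which is precisely the statement to be proved.

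The main obstacle I anticipate is the bookkeeping inside the exchange step, namely verifying that augmenting along $P$ in $\mathcal B(\bar A)$ and simultaneously dropping at most one slack edge keeps the result a valid matching of $\mathcal B(\bar A,\bar S)$; the strict inequality $w(e_{\bar S}) > c_{\bar A}$ is exactly what renders the swap strictly profitable in the second case and drives the whole argument.
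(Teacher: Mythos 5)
Your proof is correct and follows essentially the same route as the paper's: decompose a maximum matching of $(\mathcal B(\bar A,\bar S);w)$ via Lemma~\ref{maxMatAugDigraph}, note that the uniform state-edge weight makes the state contribution constant once the state part is a maximum matching of $\mathcal B(\bar A)$, and reduce the minimization to the slack-edge weight-sum. The one point where you go beyond the paper is the augmenting-path exchange argument forcing $M_{\bar A}$ to be maximum --- the paper merely asserts that edges in $\mathcal E_{\mathcal X,\mathcal X}$ are ``preferred'' over those in $\mathcal E_{\mathcal S,\mathcal X}$ --- and your case analysis (dropping the slack edge at the path's right endpoint and using $w(e_{\bar S})>c_{\bar A}$ to make the swap strictly weight-decreasing) is a valid way to make that assertion rigorous.
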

\begin{proof}
From Lemma~\ref{maxMatAugDigraph} we have that any maximum matching of $\mathcal B(\bar A, \bar S)$ comprises a set $\mathcal E^*_{\bar S}\subset \mathcal E_{\mathcal S,\mathcal X}$ that consists in the largest collection of edges incoming into a set of right-unmatched vertices associated with a maximum matching $M^*_{\bar A}$ of $\mathcal B(\bar A)$.  In addition, since the weights of the edges in $\mathcal E_{\mathcal X,\mathcal X}$ are uniform and   less than the weights of the edges  in $\mathcal E_{\mathcal S,\mathcal X}$, it follows that the edges from $\mathcal E_{\mathcal X,\mathcal X}$ are preferred over the edges in $\mathcal E_{\mathcal S ,\mathcal X}$ as far as the maximum matching $M^*_{\bar A,\bar S}$ is concerned; consequently,  the edges from $\mathcal E_{\mathcal X,\mathcal X}$ in $M^*_{\bar A,\bar S}$ are those that belong to a maximum matching  $M^*_{\bar A}$ of $\mathcal B(\bar A)$. By noticing that the weight-sum of all  matchings of $\mathcal B(\bar A)$ incur in the same cost, and a set $\mathcal E^*_S$ with the characteristics previously described must belong to the maximum matching $M^*_{\bar A,\bar S}$ of $(\mathcal B(\bar A,\bar S);w)$,  the minimum cost of $M^*_{\bar A,\bar S}$ is achieved by considering the set  $\mathcal E^*_{\bar S}$ incurring in the lowest weight-sum,  among all possible collection of edges incoming into a set of right-unmatched vertices associated with a maximum matching  of $\mathcal B(\bar A)$. 
\end{proof}

By reversing the inequality between  the weights of the edges between state variables and those outgoing from the slack variables, we obtain the following result.
\vspace{0.4cm}

\begin{lemma}\label{reductionMat} Let  $\bar A\in \{0,1\}^{n \times n}$ and $\bar S\in\{0,1\}^{n\times p}$ with $p\le n$. Consider the weighted state-slack bipartite graph $(\mathcal B(\bar A,\bar S);w)$, where $\mathcal B(\bar A,\bar S)=\big (\mathcal X\cup \mathcal S,\mathcal X,\mathcal E \equiv (\mathcal E_{\mathcal X,\mathcal X}\cup \mathcal E_{ \mathcal S,\mathcal X})\big)$, and $w:\mathcal E \rightarrow \mathbb{R}^+_0\cup\{\infty\}$ such that   $w(e_{\bar S})< w(e_{\bar A})=c_{\bar A}\in\mathbb{R}^+$, with $e_{\bar S}\in \mathcal E_{\mathcal S,\mathcal X}$ and $e_{\bar A}\in \mathcal E_{\mathcal X,\mathcal X}$.
A minimum weighted maximum matching $M^*_{\bar A,\bar S}$ of $(\mathcal B(\bar A, \bar S); w)$ is given by
\[
M^*_{\bar A,\bar S}=M^*_{\bar S}\cup M_{\bar A},
\]
where $M^*_{\bar S}$ and $M_{\bar A}$ are as given in Lemma~\ref{maxMatAugDigraph}, and  $M^*_{\bar S}$ is a maximum matching of $\mathcal B(\bar S)=\mathcal B(\mathcal S,\mathcal X,\mathcal E_{\mathcal S,\mathcal X})$ whose  edges   incur in the lowest weight-sum among all possible maximum matchings of  $\mathcal B(\bar S)$.
\hfill $\diamond$
\end{lemma}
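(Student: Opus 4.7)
The plan is to mirror the argument of Lemma~\ref{findUnmatched} with the weight inequality reversed. First, I would invoke Lemma~\ref{maxMatAugDigraph} to decompose any maximum matching $M^*_{\bar A,\bar S}$ of $\mathcal B(\bar A,\bar S)$ as a disjoint union $M_{\bar A}\sqcup M_{\bar S}$, where $M_{\bar A}=M^*_{\bar A,\bar S}\cap \mathcal E_{\mathcal X,\mathcal X}$ is a matching of $\mathcal B(\bar A)$ and $M_{\bar S}=M^*_{\bar A,\bar S}\cap \mathcal E_{\mathcal S,\mathcal X}$ is a matching of $\mathcal B(\bar S)$.

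Next, I would exploit the fact that the cardinality $N=|M^*_{\bar A,\bar S}|$ of a maximum matching is a constant determined by $\mathcal B(\bar A,\bar S)$ to rewrite the total weight as
\[
W = c_{\bar A}(N-|M_{\bar S}|)+\sum_{e\in M_{\bar S}}w(e) = c_{\bar A} N - \sum_{e\in M_{\bar S}}\bigl[c_{\bar A}-w(e)\bigr].
\]
Since $c_{\bar A}-w(e)>0$ for every $e\in \mathcal E_{\mathcal S,\mathcal X}$, minimizing $W$ amounts to first maximizing $|M_{\bar S}|$ across the slack parts of all maximum matchings of $\mathcal B(\bar A,\bar S)$ and then, subject to that cardinality, minimizing $\sum_{e\in M_{\bar S}}w(e)$. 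In other words, slack edges are now strictly preferred to state edges, exactly dual to the state-preference argument used in Lemma~\ref{findUnmatched}.

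I would then argue that the largest value of $|M_{\bar S}|$ attainable across slack parts of maximum matchings of $\mathcal B(\bar A,\bar S)$ is precisely $|M^*_{\bar S}|$, the size of a maximum matching of $\mathcal B(\bar S)$. The upper bound is immediate, since $M_{\bar S}$ is itself a matching of $\mathcal B(\bar S)$. For the matching lower bound, I would start from any maximum matching $M^*_{\bar S}$ of $\mathcal B(\bar S)$ and, invoking Lemma~\ref{lemma1}-(1), extend it by a matching of $\mathcal B(\bar A)$ whose right endpoints lie in $\mathcal X\setminus\mathcal R(M^*_{\bar S})$; the resulting union is a maximum matching of $\mathcal B(\bar A,\bar S)$, for otherwise Lemma~\ref{maxMatAugDigraph} would exhibit a maximum matching of $\mathcal B(\bar A,\bar S)$ whose slack component strictly exceeds $|M^*_{\bar S}|$, contradicting the maximality of $M^*_{\bar S}$ in $\mathcal B(\bar S)$.

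Finally, among all maximum matchings of $\mathcal B(\bar S)$, the one minimizing $\sum_{e\in M_{\bar S}}w(e)$ is by definition the desired $M^*_{\bar S}$, and any completion $M_{\bar A}$ of it in the sense of Lemma~\ref{maxMatAugDigraph} yields a minimum weight maximum matching of the claimed form. The step I expect to be the main obstacle is the extension argument in the third paragraph, namely certifying that every maximum matching of $\mathcal B(\bar S)$ indeed completes to a maximum matching of $\mathcal B(\bar A,\bar S)$; once this extension property is in hand, the remainder of the proof is a direct algebraic consequence of the weight inequality $w(e_{\bar S})<c_{\bar A}$.
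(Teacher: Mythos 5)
Your decomposition and the identity $W=c_{\bar A}N-\sum_{e\in M_{\bar S}}\bigl[c_{\bar A}-w(e)\bigr]$ are correct, but the step that converts this into a lexicographic rule --- first maximize $|M_{\bar S}|$, then minimize $\sum_{e\in M_{\bar S}}w(e)$ --- does not follow from the positivity of the terms $c_{\bar A}-w(e)$. Maximizing a sum of positive terms is not the same as maximizing the number of terms: with $c_{\bar A}=10$, a slack part of two edges of weight $9$ contributes $\sum\bigl(c_{\bar A}-w(e)\bigr)=2$, while a slack part of a single edge of weight $0$ contributes $10$, so the smaller slack part yields the strictly smaller $W$. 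Turning the weight inequality into the claimed two-stage optimization would require an exchange argument showing that the slack part can always be enlarged at a marginal cost below $c_{\bar A}$ per added edge, which you do not supply.

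The more serious gap is the extension step you yourself flag as the main obstacle. The justification you offer --- that if $M^*_{\bar S}\cup M_{\bar A}$ were not maximum, Lemma~\ref{maxMatAugDigraph} would exhibit a maximum matching with slack component exceeding $|M^*_{\bar S}|$ --- is a non sequitur: non-maximality is repaired along an augmenting path that may enlarge $M_{\bar A}$ while rerouting or even shrinking the slack component, and nothing in Lemma~\ref{maxMatAugDigraph} forces the slack component to grow. Indeed the extension property is false: take $\mathcal E_{\mathcal X,\mathcal X}=\{(x_2,x_1)\}$ and $\mathcal E_{\mathcal S,\mathcal X}=\{(s_1,x_1),(s_1,x_2)\}$ with $w(s_1,x_1)=1$, $w(s_1,x_2)=5$, $c_{\bar A}=10$. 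Then $\{(s_1,x_1)\}$ is the minimum-weight maximum matching of $\mathcal B(\bar S)$, yet the unique maximum matching of $\mathcal B(\bar A,\bar S)$ is $\{(s_1,x_2),(x_2,x_1)\}$, whose slack part is the more expensive $\{(s_1,x_2)\}$. Only the cardinality statement survives (some maximum matching of $\mathcal B(\bar A,\bar S)$ saturates a maximum independent subset of $\mathcal S$, by augmenting paths or matroid augmentation), not the claim that the cheapest maximum matching of $\mathcal B(\bar S)$ itself occurs as the slack part; the example in fact contradicts the conclusion of Lemma~\ref{reductionMat} as literally stated, so the argument cannot be completed without the extra structure that $\bar S$ carries in Algorithms~\ref{solP1} and~\ref{solP2} (columns covering all of $\mathcal X$ or entire non-top linked SCCs). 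For comparison, the paper's own proof is the same exchange heuristic and silently assumes both points above; your more explicit rendering has the merit of exposing exactly where the reasoning needs repair.
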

\begin{proof}
The proof  follows a   similar reasoning to that in the proof of Lemma~\ref{maxMatAugDigraph}. In particular, notice that  $M^*_{\bar S}$ is a maximum matching of $\mathcal B(\bar S)$ because  an arbitrary weight-sum of the edges of a maximum matching of $\mathcal B(\bar S)$ is smaller than that of a collection of edges of $\mathcal B(\bar A,\bar S)$ with the  same size containing edges from $\mathcal E_{\mathcal X,\mathcal X}$, and secondly it consists in the largest collection of edges incoming into a set of right-unmatched vertices associated with a maximum matching $M^*_{\bar A}$ of $\mathcal B(\bar A)$\end{proof}

\subsection{Solution to $\mathcal P_1$}

Now, we present the  reduction of $\mathcal P_1$ to a minimum weight maximum matching problem. Intuitively, given the system's dynamical structure and its digraph representation, we consider an extended digraph with as many slack variables as the minimum number of state variables required to obtain a feasible dedicated input configuration (recall Remark 2).  These slack variables will indicate which state variables should be considered to achieve a feasible dedicated input configuration. Towards this goal, outgoing edges from the slack variables into the state variables (to be considered to the feasible dedicated input configuration)  are judiciously chosen such that a minimum weight maximum matching containing these edges exists; hence, corresponding to the feasible dedicated input configuration that incurs in minimum cost. The systematic reduction of $\mathcal P_1$ to a minimum weight maximum matching,  is presented in Algorithm~\ref{solP1}. Although in Algorithm~\ref{solP1} we determine a solution $\bar B$ that is dedicated, in Remark~\ref{nondedicatedP1}  (using Remark~\ref{GeneralIOsel}), we can characterize all possible solutions to $\mathcal P_1$. Next, we present the proof of correctness of Algorithm~\ref{solP1} and its complexity.

\begin{algorithm}

\textbf{Input:} The structural $n\times n$ system matrix $\bar A$, and the vector  $ c$ of size $n$ comprising the cost of actuating each state variable.

\textbf{Output:} A solution $\bar B$ to $\mathcal P_1$ comprising dedicated inputs.

\textbf{1.} Determine the minimum number  $p$ of dedicated inputs required to ensure structural controllability \cite{PequitoJournal}.

\textbf{2.} Let $\mathcal N^T_j$, with $j=1,\cdots,\beta$, denote the non-top linked SCCs of $\mathcal D(\bar A)$. Let $c_{\max}$ be the maximum real value (i.e., not considering $\infty$) in $c$, and consider  $p$ slack variables, where each slack variable $k=1,\ldots,\beta$ has  outgoing edges to all the state variables in the $k$-th non-top linked SCC $\mathcal N^T_k$, whereas, for  the remaining $p-\beta$ slack variables have outgoing edges  to all state variables, i.e., 
\[
\bar S=\left[\begin{array}{cccc}
| & | & &|\\
\bar s_1 & \bar s_2 & \cdots &\bar s_p \\
| & | & &|
\end{array}\right], 
\]
where the $i$th entry of $\bar s_k$ is given by $[\bar s_k]_i=1$ if $x_i\in\mathcal N^k$ with $k=1,\ldots,\beta$, and $[\bar s_k]_i=0$  otherwise; further, for $k=\beta+1,\ldots,p$ we have $[\bar s_k]_i=1$ for $i=1,\ldots,n$.
Now, consider $(\mathcal B(\bar A,\bar S);w)$ where $w$ is given as follows:

\[
w(e)=\left\{ \begin{array}{cc}
c_{\text{max}}+1, & e\in\mathcal E_{\mathcal X,\mathcal X},\\
c_i, & e\equiv (s_j,x_i) \in\mathcal E_{\mathcal S,\mathcal X}, \ j=1,\ldots,p,\\
\infty, & \text{otherwise}.
\end{array} \right.
\]

%
%{\color{red}
%Set a cost matrix associated with the non-zero entries of $\bar A$ as follows 
%\[ [\mathcal C_{\bar A}]_{i,j}=\left\{ \begin{array}{ll}
%c_{\max}+1, & \text{ if } \bar A_{i,j}\neq 0,\\
%\infty, & \text{otherwise}.
%\end{array}\right.
%\]
%
% In addition, weights are assigned to these new edges edges, and are given by  a $n\times p$ cost matrix $\mathcal C_{\bar S}$ associated with the slack variables as follows:
%\[
%\mathcal C_{\bar S}=[ v^1 \  \cdots \ v^{\beta} \ c\  \cdots\  c  ],
%\]
%where $v^k$, for $k=1,\cdots,\beta$, is a $n\times 1$ vector whose $i$-th entry is given by
%\[
%v^k_i=\left\{ \begin{array}{ll}
%c_i, & \text{ if } x_{i}\in \mathcal N^T_{k},\\
%\infty, & \text{otherwise}.
%\end{array}\right. 
%\]
%
%}
\textbf{3.} Determine the minimum weight maximum matching $M^*$ associated with the bipartite graph $(\mathcal B(\bar A,\bar S);w)$.

\textbf{4.}  Assign dedicated inputs to the state variables in $\Theta=\{x\in \mathcal X: \ (s_k, x) \in M^*, \ k=1,\cdots, p\}$. In other words, given  the indices of the state variables in $\Theta$, denoted by $\mathcal J\subset\{1,\cdots,n\}$, set $\bar B=\mathbb{I}_n^{\mathcal J}$, where $\mathbb{I}_n^{\mathcal J}$ is a submatrix of the $n\times n$ identity matrix consisting of the columns with  indices in $\mathcal J$. If $|\mathcal J|=p$ and the weight-sum of $M^*$ is finite,  then $(\bar A,\bar B)$ is structurally controllable,  and a solution to $\mathcal P_1$ is obtained; otherwise,  the problem is infeasible, i.e., there is no feasible $\bar{B}$ (with finite cost) such that $(\bar{A},\bar{B})$ is structurally controllable.

\caption{Solution to $\mathcal P_1$}	
\label{solP1}
\end{algorithm}

\vspace{0.25cm}

\begin{theorem}\label{theorem1costAutomatica}
Algorithm \ref{solP1} is correct, i.e., it provides a solution to $\mathcal P_1$ (as long as the set of feasible $\bar{B}$'s is non-empty). Moreover, its computational complexity is  $\mathcal O(n^3)$. \hfill $\diamond$
\end{theorem}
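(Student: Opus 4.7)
The plan is to reduce $\mathcal{P}_1$ to a minimum-weight maximum matching on $(\mathcal{B}(\bar A, \bar S); w)$ by establishing a cost-preserving correspondence between the maximum matchings of $\mathcal{B}(\bar A,\bar S)$ and the feasible dedicated input configurations of size $p$, and then to bound the total runtime by the Hungarian algorithm. First I would show that every maximum matching $M^*$ of $\mathcal{B}(\bar A,\bar S)$ contains exactly $p$ slack-state edges whose right-endpoints form a feasible dedicated input configuration $\Theta$. For existence, Theorem~\ref{FDIC} together with the definition of $p$ in Step~1 (see \cite{PequitoJournal}) guarantees a configuration $\Theta^*$ of size $p$ comprising one representative from each non-top linked SCC $\mathcal{N}^T_k$ and a set of right-unmatched vertices of some maximum matching $M^*_{\bar A}$ of $\mathcal{B}(\bar A)$; pairing the first $\beta$ slacks with the SCC representatives in $\Theta^*$, the remaining $p-\beta$ slacks with the other states of $\Theta^*$, and adjoining $M^*_{\bar A}$, yields a matching of $\mathcal{B}(\bar A,\bar S)$ with $p$ slack edges. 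Lemma~\ref{maxMatAugDigraph} then forces every maximum matching to include this largest possible number of slack edges; conversely, because the first $\beta$ slacks only see their assigned SCCs, whenever all $p$ slacks are matched $\Theta$ contains one state per non-top linked SCC, and Lemma~\ref{maxMatAugDigraph} identifies $\Theta$ with a superset of the right-unmatched vertices of some maximum matching of $\mathcal{B}(\bar A)$. The converse direction of Theorem~\ref{FDIC} then certifies $\Theta$ as a feasible dedicated input configuration.

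Next I would translate the weight-sum of $M^*$ into the placement cost $\|\bar B\|_c$. Every maximum matching of $\mathcal{B}(\bar A,\bar S)$ has the same cardinality $k$; by the previous step $p$ of those edges are slack-state and $k-p$ are state-state, so
\[
\sum_{e\in M^*} w(e) = (k-p)(c_{\max}+1) + \sum_{(s_j,x_i)\in M^*\cap \mathcal{E}_{\mathcal{S},\mathcal{X}}} c_i = (k-p)(c_{\max}+1) + \|\bar B\|_c,
\]
where $\bar B=\mathbb{I}_n^{\mathcal{J}}$. Since the first term is constant across maximum matchings, minimizing total weight is equivalent to minimizing $\|\bar B\|_c$ over the induced family of dedicated configurations; hence Step~3 returns a dedicated optimum. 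Restricting to dedicated solutions is without loss of generality for $\mathcal{P}_1$: by Remark~\ref{GeneralIOsel}, any feasible $\bar B'$ with $\|\bar B'\|_0=p$ must have at most one nonzero per row, and so its set of actuated rows is itself a feasible dedicated input configuration of size $p$ with identical cost $\|\bar B'\|_c$. Infeasibility is handled cleanly: a finite-cost solution to $\mathcal{P}_1$ exists if and only if some size-$p$ feasible dedicated input configuration avoids the states $x_i$ with $c_i=\infty$, equivalently if and only if the minimum-weight maximum matching has finite weight and $|\mathcal{J}|=p$, which is exactly the test performed in Step~4.

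For complexity, Step~1 runs in $\mathcal{O}(n^3)$ by \cite{PequitoJournal}, Steps~2 and 4 are $\mathcal{O}(n^2)$, and Step~3 solves the minimum-weight maximum matching on a bipartite graph with $\mathcal{O}(n)$ vertices per side using the Hungarian algorithm in $\mathcal{O}(n^3)$; aggregating gives $\mathcal{O}(n^3)$ overall. The principal obstacle I anticipate is in the first paragraph: proving that maximum matchings of $\mathcal{B}(\bar A,\bar S)$ \emph{exactly} parameterize the minimum-size feasible dedicated input configurations, neither more nor fewer, especially in the presence of state variables playing a \emph{double role}. This hinges on the careful design of $\bar S$ (the first $\beta$ slacks restricted to specific SCCs and the remainder unrestricted) interacting with Lemma~\ref{maxMatAugDigraph} to simultaneously enforce both conditions of Theorem~\ref{FDIC}.
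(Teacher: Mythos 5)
Your proposal is correct and follows essentially the same route as the paper: reduce $\mathcal{P}_1$ to a minimum-weight maximum matching on $(\mathcal B(\bar A,\bar S);w)$, use Theorem~\ref{FDIC} together with Lemmas~\ref{maxMatAugDigraph} and~\ref{reductionMat} to identify the slack edges of the optimal matching with a minimum-size feasible dedicated input configuration, invoke Remark~\ref{GeneralIOsel} to dismiss non-dedicated competitors, and charge the runtime to Step~1 and the Hungarian algorithm. The only substantive difference is presentational: you make optimality explicit via the weight decomposition $(k-p)(c_{\max}+1)+\|\bar B\|_c$ (note that ``exactly $p$ slack edges'' holds for the \emph{minimum-weight} maximum matching rather than for every maximum matching, which your own inequality $c_i\le c_{\max}<c_{\max}+1$ already repairs), whereas the paper argues optimality by contradiction through an isomorphic state--slack digraph.
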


\begin{proof}
First,  we notice that a solution obtained using Algorithm~\ref{solP1} is feasible, i.e., leads to $\bar B=\mathbb{I}_n^{\mathcal J}$  such that $(\bar A,\bar B)$ is structurally controllable, if $|\mathcal J|=p$ and the weight-sum of $M^*_{\bar A,\bar B}$ is finite.  More precisely,   let $\mathcal D(\bar A)$ be the state digraph comprising  $\beta$ non-top linked SCCs. Further, it is possible to (efficiently) determine a minimal feasible dedicated input configuration~\cite{PequitoJournal}, and we denote its size by $p$. By recalling  Theorem~\ref{FDIC}, we have  at least $\beta$ state variables in different non-top linked SCCs and the remaining $p-\beta$ state variables correspond to right-unmatched vertices in the set of right-unmatched vertices associated with a maximum matching of the state bipartite graph  $\mathcal B(\bar A)$ and do not belong to the non-top linked SCCs. Consequently, we consider an augmented digraph $\mathcal D(\bar A,\bar S)$, with $\bar S\in\{0,1\}^{n\times p}$, with  $p$ slack variables, that will indicate the variables to be considered for obtaining a minimal feasible dedicated input configuration. Further,  we construct $\bar S$ to be as follows: each of the  $\beta$ slack variables are such that slack variable $k$, with $k=1,\ldots,\beta$,   has   only outgoing edges to all the state variables in the non-top linked SCC $k$, and the remaining the remaining $p-\beta$ slack variables have edges to all state variables.   From  Lemma~\ref{maxMatAugDigraph} and  the knowledge that a feasible dedicated input configuration with $p$ state variables exists, we can argue that a maximum matching of $\mathcal B(\bar A, \bar S)$ contains edges outgoing from slack variables and  ending in all right-unmatched vertices  with respect to a maximum matching of $\mathcal B(\bar A)$. Furthermore, there exists  a maximum matching $M^*_{\bar A,\bar S}$ of $\mathcal B(\bar A, \bar S)$,  where all slack variables belong to  matching edges in $M^*_{\bar A,\bar S}$. In the former  case, due to the proposed construction, there  is  at least one edge from a slack variable to each non-top linked SCC; hence, by Theorem~\ref{FDIC}, the collection of the state variables, where the edges  with origin in slack variables belonging to  $M^*_{\bar A,\bar S}$ end, is a feasible dedicated input configuration; such a collection  is also minimal since it has exactly $p$ state variables -- the size of a minimal feasible dedicated input configuration. Therefore, we aim to determine such a matching, which will be accomplished by  considering a minimum weight maximum matching problem. More precisely, we associate weights with the edges of the proposed digraph:   weights of the edges outgoing from the slack variables are set to be equal to the cost of actuating the state variables (specified by the cost vector $c$) corresponding to the end points of the edges, whereas, the remaining edges are assigned large enough (otherwise arbitrary)  weights, in particular, higher than those  of the edges outgoing from the slack variables. Therefore, taking $(\mathcal B(\bar A, \bar S);w)$ to be the weighted version of $\mathcal B(\bar A, \bar S)$ with the weight function as previously described,  by invoking Lemma~\ref{reductionMat}, there exists a  maximum matching  $M^*_{\bar A,\bar S}$ of $\mathcal B(\bar A, \bar S)$, where each edge  with origin in slack variables belonging to  $M^*_{\bar A,\bar S}$  indicates which state  variables should be actuated, and such  collection is a  feasible dedicated input configuration if it is of size $p$ and the sum of the weights of the edges in  $M^*_{\bar A,\bar S}$ is finite. In other words, an infinite cost would correspond to the case where no feasible dedicated input configuration exists, i.e., no finite cost input matrix $\bar{B}$ can make the system structurally controllable.  In summary, we obtain a minimal feasible dedicated input configuration with the lowest cost, which corresponds to a (dedicated) solution to $\mathcal P_1$. In Figure~\ref{fig:mincostFDIC}, we present  an illustrative example, where we present the construction for a digraph consisting of a single (i.e., $\beta =1$) non-top linked SCC and the size of the minimum feasible dedicated configuration is $p=2$.

Now, to conclude that $\bar B$ obtained by Algorithm~\ref{solP1} incurs in the minimum cost, suppose by contradiction that this is not the case. This implies that, there exists another feasible  $\bar B'$ leading to a smaller  cost. If $\bar B'$ has multiple nonzeros in the same column, given  Remark \ref{GeneralIOsel}, there exists $\bar B''$ with the same cost as $\bar B'$ and  with at most one nonzero entry in each column such that $(\bar{A},\bar{B}'')$ is structurally controllable.  Consequently, by letting $\mathcal D(\bar A,\bar B'')=(\mathcal X\cup \mathcal U,\mathcal E_{\mathcal X,\mathcal X}\cup\mathcal E_{\mathcal U,\mathcal X})$ and $\mathcal D(\bar A, \bar S)$ to be isomorphic, and considering the weight function $w$  as in  Algorithm \ref{solP1}, it follows by Lemma~\ref{reductionMat} that there exists a maximum matching $M''$ of $(\mathcal B(\bar A, \bar S)=(\mathcal X\cup \mathcal S,\mathcal E_{\mathcal X,\mathcal X}\cup\mathcal E_{\mathcal S,\mathcal X});w)$ containing $\mathcal E_{\mathcal S,\mathcal X}$. Nevertheless, this is a contradiction since it implies there exists a maximum matching $M''$ incurring in a lower cost than $M^*$ obtained using, for instance, the Hungarian algorithm~\cite{Munkres1957}, and used to construct $\bar B$.

Finally, the computational  complexity follows from noticing that Step 1 has complexity  $\mathcal O(n^3)$ \cite{PequitoJournal}. Step 2 can be computed using linear complexity algorithms. In Step 3, the  Hungarian algorithm is used on the $n\times (n+p)$ matrix obtained at the end of Step 2, and  incurs in $\mathcal O(n^3)$ complexity. Finally, Step~4 consists of a for-loop operation which has linear complexity. Hence, summing up the different complexities, the result follows. 
\end{proof}

\begin{remark}\label{nondedicatedP1}Now, consider $(\mathcal B(\bar A,\bar B)=(\mathcal X\cup \mathcal S,\mathcal E_{\mathcal X,\mathcal X}\cup\mathcal E_{\mathcal U,\mathcal X}),w')$ where $\bar B$  is as obtained from Algorithm~\ref{solP1} and $w'$ is given as follows:

\[
w'(e)=\left\{ \begin{array}{cc}
1, & e\in\mathcal E_{\mathcal X,\mathcal X},\\
2, & e \in\mathcal E_{\mathcal U,\mathcal X},\\
\infty, & \text{otherwise}.
\end{array} \right.
\]
%
%{\color{red}
%  Let $\mathcal C_{\bar A}$ to be as follows:
%\[ [\mathcal C_{\bar A}]_{i,j}=\left\{ \begin{array}{ll}
%1, & \text{ if } \bar A_{i,j}\neq 0,\\
%\infty, & \text{otherwise},
%\end{array}\right.
%\]
%whereas,  with $\bar B$  obtained from Algorithm~\ref{solP1}, we let $\mathcal C_{\bar B}$ be given by:
%\[ [\mathcal C_{\bar B}]_{i,j}=\left\{ \begin{array}{ll}
%2, & \text{ if } \bar B_{i,j}\neq 0,\\
%\infty, & \text{otherwise}.
%\end{array}\right.
%\]
%}

Therefore, considering $(\mathcal B(\bar A,\bar B);w)$ and using Lemma~\ref{findUnmatched}, a minimum weight maximum matching comprises the edges from $\mathcal E_{\mathcal U,\mathcal X}$ with end-points in the  state variables that belong to the set of right-unmatched vertices $\mathcal U_R(M^*_{\bar A})$ associated with a maximum matching $M^*_{\bar A}$ of $\mathcal B(\bar A)$. Consequently, from  Remark~\ref{GeneralIOsel} and the dedicated solution obtained with Algorithm~\ref{solP1}, we can further obtain a non-dedicated solution to $\mathcal{P}_{1}$; more precisely, one requires  $m$ distinct inputs, where $m$ is the number of right-unmatched vertices $\mathcal U_R(M^*_{\bar A})$,  assigned to those state variables in $\mathcal U_R(M^*_{\bar A})$ and  some input (potentially the same) must be assigned to the remaining state variables required to ensure structural controllability (identified by the dedicated solution). \hfill $\diamond$
\end{remark}

\subsection{Solution to $\mathcal P_2$}

Next, we present the reduction of $\mathcal P_2$ to a minimum weight maximum matching problem,  similar to the reduction presented in the previous subsection. Nevertheless, because of the (potential) double role of the state variables in a minimal feasible input configuration (i.e., state variables in a non-top linked SCC and right-unmatched vertices), we may be able to further reduce the cost by considering two state variables instead of one playing a double role used in the construction of a minimal feasible dedicated input configuration (associated with a solution to $\mathcal P_1$).  The aforementioned reduction is described in detail in Algorithm \ref{solP2} that provides a dedicated solution. Nevertheless, a general non-dedicated solution to $\mathcal P_2$ is characterized using Remark \ref{GeneralIOsel} and Remark~\ref{nondedicatedP1}. The correctness and computational complexity of Algorithm~\ref{solP2} is presented next.

\begin{algorithm}

\textbf{Input:} The structural $n\times n$  system matrix $\bar A$, and the vector $ c$ of size $n$  comprising the cost of actuating each state variable.

\textbf{Output:} A solution $\bar B$ to $\mathcal P_2$ comprising dedicated inputs.

\textbf{1.} Determine the minimum number  $p$ of dedicated inputs required to ensure structural controllability \cite{PequitoJournal}.

\textbf{2.} Let $\mathcal N^T_j$, with $j=1,\cdots,\beta$, denote the non-top linked SCCs of $\mathcal D(\bar A)$. Additionally, let $c_{\max}$ be the maximum real value (i.e., not considering $\infty$) in $c$, and $c_{\min}^k$ corresponds to the minimum cost associated with the state variables in $\mathcal N^T_k$.  In addition, consider  $p$ slack variables, where each slack variable has  outgoing edges to all the state variables.

Now, consider $(\mathcal B(\bar A,\bar S);\tilde w)$ where $\tilde w$ is given as follows:

\[
\tilde w(e)=\left\{ \begin{array}{ccl}
c_{\text{max}}+1, & e\in\mathcal E_{\mathcal X,\mathcal X},&\\
c_i, & e\equiv (s_k,x_i) \in\mathcal E_{\mathcal S,\mathcal X} & \text{ and } x_i\in\mathcal N^k,\\
&& \ k=1,\ldots,\beta,\\
c_i+c_{\text{min}}^k, & e\equiv (s_k,x_i) \in\mathcal E_{\mathcal S,\mathcal X} &\text{ and } x_i\notin\mathcal N^k, \\ & & \ k=1,\ldots,\beta,\\
c_i, & e\equiv (s_k,x_i) \in\mathcal E_{\mathcal S,\mathcal X},& k=\beta+1,\ldots,p,\\
\infty, & \text{otherwise}.
\end{array} \right.
\]

%
%{\color{red}
%Set a cost matrix associated with the non-zero entries of $\bar A$ as follows 
%\[ [\mathcal C_{\bar A}]_{i,j}=\left\{ \begin{array}{ll}
%c_{\max}+1, & \text{ if } \bar A_{i,j}\neq 0,\\
%\infty, & \text{otherwise}.
%\end{array}\right.
%\]
%
%
% In addition, weights are assigned to these new edges, and are given by  a $n\times p$ cost matrix $\mathcal C_{\bar S}$ associated with the slack variables as follows:
%\[
%\tilde{\mathcal C}_{\bar S}=[ \tilde v^1  \ \cdots \ \tilde v^{\beta} \ c \ \cdots\  c  ],
%\]
%where $\tilde v^k$, for $k=1,\cdots,\beta$, is a $n\times 1$ vector whose $i$-th entry is given by
%\[
%\tilde v^k_i=\left\{ \begin{array}{ll}
%c_i, & \text{ if } x_{i}\in \mathcal N^T_{k},\\
%c_i+c_{\min}^k, & \text{otherwise},
%\end{array}\right. 
%\]
%and $c_{\min}^k$ corresponds to the minimum cost associated with the state variables in $\mathcal N^T_k$.
%}

\textbf{3.} Determine the  minimum weight maximum matching $M^*$ associated with the bipartite graph $(\mathcal B(\bar A, \bar S);\tilde w)$.

\textbf{4.} Let $M^*=\{(s_k,x_{\sigma(k)}) : k=1,\cdots, p\}$ where $\sigma(.)$ is a permutation of the state variables indices.  Assign dedicated inputs to the state variables in $\Theta'$, given  as follows 
\[
\Theta'= \bigcup_{k=1,\dots, p} \Omega_k, 
\]
where
\[
 \Omega_k=\left\{\begin{array}{ll}
\{x_{\sigma(k)}\}, \text{ if } x_{\sigma(k)}\in \mathcal N^T_k,\\
\{x_{\sigma(k)},x_{min}^k\}, \text{ if } x_{\sigma(k)}\notin \mathcal N^T_k
\end{array}
\right.
\]
with $x_{min}^k$ a state variable in $\mathcal N^T_k$ with the minimum cost. In other words, given  the indices of the state variables in $\Theta'$, denoted by $\mathcal J'\subset\{1,\cdots,n\}$, set $\bar B=\mathbb{I}_n^{\mathcal J'}$, where $\mathbb{I}_n^{\mathcal J'}$ is a submatrix of the $n\times n$ identity matrix consisting of the columns with  indices in $\mathcal J'$. If the weight-sum of $M^*$ is finite, then $(\bar A,\bar B)$ is structurally controllable;  and a solution to $\mathcal P_2$ is obtained; otherwise, the problem is infeasible, i.e., there is no feasible $\bar{B}$ (with finite cost) such that $(\bar{A},\bar{B})$ is structurally controllable.

\caption{Solution to $\mathcal P_2$}	
\label{solP2}
\end{algorithm}
\vspace{0.4cm}

\begin{theorem}\label{theorem2costAutomatica}
Algorithm \ref{solP2} is correct, i.e., it provides a solution to $\mathcal P_2$ (as long as the set of feasible $\bar{B}$'s is non-empty). Moreover, its computational complexity is of  $\mathcal O(n^3)$. \hfill $\diamond$
\end{theorem}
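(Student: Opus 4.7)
The plan is to follow the skeleton of the proof of Theorem~\ref{theorem1costAutomatica}, adapting it to the two key novelties in Algorithm~\ref{solP2}: every slack variable is now wired to all state variables (not only to those in a specific non-top linked SCC), and the weight $\tilde w$ attaches a penalty $c_{\min}^{k}$ whenever a slack $s_k$ with $k\le\beta$ is matched to a state variable outside $\mathcal{N}^T_k$. This penalty is exactly the cost of additionally actuating the cheapest variable of $\mathcal{N}^T_k$, which is what Step~4 implements via the $\Omega_k$'s. I would proceed in three stages---feasibility, optimality, and complexity---and I expect the main subtlety to arise in the second stage due to an asymmetric bookkeeping between matching weight and actual cost.

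For \textbf{feasibility}, I would first show that the minimum-weight maximum matching $M^*$ of $(\mathcal{B}(\bar A, \bar S); \tilde w)$ saturates all $p$ slack vertices. This follows from Lemma~\ref{maxMatAugDigraph} together with the fact that $p\ge m$ (by the characterization of $p$ in~\cite{PequitoJournal}), so a matching of $\mathcal{B}(\bar A,\bar S)$ of size $n$ using $p$ slack edges exists. Setting $\mathcal{T}=\{x_{\sigma(k)}\}_{k=1}^p$, Lemma~\ref{maxMatAugDigraph} further guarantees that $\mathcal{T}$ contains the right-unmatched vertices of some maximum matching of $\mathcal{B}(\bar A)$. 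The construction of $\Omega_k$ in Step~4 ensures that each $\mathcal{N}^T_k$ is represented in $\Theta'$: either by $x_{\sigma(k)}$ itself when $x_{\sigma(k)}\in\mathcal{N}^T_k$, or by the appended $x_{\min}^k$ otherwise. Theorem~\ref{FDIC} then yields structural controllability of $(\bar A,\bar B)$.

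For \textbf{optimality}, let $W_S=\sum_{k=1}^p \tilde w(s_k,x_{\sigma(k)})$ denote the slack-edge weight of $M^*$. A direct computation, using that non-top linked SCCs are pairwise disjoint, gives $\|\bar B\|_c\le W_S$ with equality iff no $x_{\min}^{k_1}$ coincides with any $x_{\sigma(k_2)}$ for $k_1\ne k_2$. Conversely, for any feasible $\bar B^{\circ}$ (WLOG dedicated by Remark~\ref{GeneralIOsel}, since extra row-nonzeros only inflate $\|\bar B^{\circ}\|_c$), with actuated set $\mathcal{S}_u^{\circ}=\mathcal{U}_R(M)\cup\mathcal{A}$ decomposed via Theorem~\ref{FDIC}, I would exhibit a maximum matching of $(\mathcal{B}(\bar A,\bar S);\tilde w)$ of weight at most $\|\bar B^{\circ}\|_c+(n-p)(c_{\max}+1)$ by taking $\mathcal{T}^\circ=\mathcal{U}_R(M)\cup T^e$ with $T^e\subseteq\mathcal{A}\setminus\mathcal{U}_R(M)$ of size $p-m$, pairing slack $s_k$ ($k\le\beta$) with its SCC-representative $y_k\in\mathcal{A}\cap\mathcal{N}^T_k$ whenever $y_k\in\mathcal{T}^\circ$, and pairing the remaining slacks arbitrarily inside $\mathcal{T}^\circ$; each residual penalty is $c_{\min}^k\le c_{y_k}$, so the slack-edge weight is at most $\|\bar B^{\circ}\|_c$. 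Chaining the two sides gives
\[
\|\bar B\|_c\;\le\; W_S\;\le\; \|\bar B^{\circ}\|_c
\]
for every feasible $\bar B^{\circ}$, so $\bar B$ is optimal.

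For \textbf{complexity}, Step~1 is $\mathcal{O}(n^3)$ by~\cite{PequitoJournal}; Steps~2 and~4 are linear in $n$; and Step~3 runs the Hungarian algorithm on an $n\times(n+p)$ bipartite graph with $p\le n$, which costs $\mathcal{O}(n^3)$. The aggregate complexity is $\mathcal{O}(n^3)$. The \emph{hard part} of the argument, as anticipated, is the asymmetric bookkeeping in the optimality step: because $W_S$ can strictly overestimate $\|\bar B\|_c$ through the coincidence $x_{\min}^{k_1}=x_{\sigma(k_2)}$, one must use the one-sided inequality $\|\bar B\|_c\le W_S$ in tandem with the tight matching built from an arbitrary feasible $\bar B^{\circ}$, squeezing $W_S$ between the two costs rather than identifying it with either.
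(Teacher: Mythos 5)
Your proof is correct and follows essentially the same route as the paper's: feasibility via Theorem~\ref{FDIC} together with Lemma~\ref{maxMatAugDigraph}, optimality by constructing a competing maximum matching of $(\mathcal B(\bar A,\bar S);\tilde w)$ from an arbitrary feasible (WLOG dedicated, by Remark~\ref{GeneralIOsel}) $\bar B^{\circ}$ and invoking minimality of $M^*$, and the same complexity accounting. The only substantive difference is presentational: you phrase optimality as the two-sided squeeze $\|\bar B\|_c \le W_S \le \|\bar B^{\circ}\|_c$ and explicitly flag that $W_S$ may strictly overestimate $\|\bar B\|_c$ when some $x_{\min}^{k_1}$ coincides with an $x_{\sigma(k_2)}$, a bookkeeping point that the paper's contradiction argument (which decomposes the actuated set of $\bar B''$ into $p''_r$ right-unmatched vertices and $p''_n$ non-top-linked-SCC representatives and matches the edge costs $c_i$ and $c_i+c_{\min}^k$ against them) leaves implicit.
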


\begin{proof}
To establish the feasibility of the solution, we first notice that no more than two state variables are required to substitute one  state variable with double role  to ensure structural controllability, since additional state variables increase the cost unnecessarily. The reduction proposed is similar to that which reduces $\mathcal P_1$ to a minimum weighted maximum matching, but we need to consider new edges from the slack variables with outgoing edges only to the state variables in a single SCC (and used to ensure the existence of a state variable in  each non-top linked SCC) to the remaining state variables.  The weight of these new edges consists of the cost of a state variable in the corresponding non-top linked SCC  and incurring in the  lowest cost, plus the cost of the state  variable where the edge ends on.  Therefore, if such an edge is selected in the minimum weight maximum matching of $(\mathcal B(\bar A,  \bar S);w)$,  it means that it is cheaper to actuate two state variables than  actuating one state variable with double role. In summary, the edges selected in the minimum weight maximum matching contain sufficient information about the state variable(s) to which dedicated inputs need to be assigned to to establish structural controllability, as in Theorem \ref{FDIC}. Furthermore, this choice of state  variables minimizes the cost, which implies a solution to $\mathcal P_2$. 

To prove that  the input matrix $\bar B$ obtained in Algorithm~2 is a solution to $\mathcal P_2$, suppose by contradiction that there exist another feasible $\bar B'$ that incurs in lower cost. By similar arguments as used in the proof of Theorem \ref{theorem1costAutomatica}, there exists a $n\times p$ matrix $\bar B''$ with at most one non-zero entry in each column and incurring in the same cost as $\bar B'$. Consequently,  $p''$ (dedicated) inputs are assigned to either right-unmatched vertices associated with a maximum matching of the state bipartite graph or state variables in a non-top linked SCC. Let us denote by $p''_r$ the number of right-unmatched vertices and $p''_n$ the number of state variables in the non-top linked SCCs that are not right-unmatched vertices, where it is easy to see that $p''_r+p''_n=p''$. Now, consider the construction of $\tilde w$ in Algorithm \ref{solP2}, we have that the $p''$ edges selected can correspond to either one of the following: i) an edge with  cost $c_i$ which corresponds to an edge ending in one of the $p''_r$ state variables; or ii) an edge with the cost $c_i+c^k_{\min}$ which corresponds to an edge ending in one of the $p''_r$ state variables and another  edge ending in one of the $p''_n$ state variables. Nevertheless, these costs have to sum to the lowest cost, which corresponds to the solution of a minimum weight maximum matching matching determined, for instance, using the Hungarian algorithm~\cite{Munkres1957}. Consequently, we obtain a contradiction since $\bar B$ was constructed using a minimum weight maximum matching with the same weight function, which implies that it has to incur in the same cost.

The complexity can be obtained in a similar fashion as in the proof of Theorem~\ref{theorem1costAutomatica}.
\end{proof}

\begin{remark}\label{nondedicatedP2}
Reproducing the steps  explained in Remark \ref{nondedicatedP1},  non-dedicated input matrices that are solution to $\mathcal P_2$ can be determined from the  dedicated solutions to $\mathcal P_2$  computed by Algorithm~\ref{solP2}.
\vspace{-0.8cm}

\hfill $\diamond$
\end{remark}

\section{Illustrative Example}\label{example}

Consider the examples in Figure \ref{fig:mincostFDIC}  and Figure~\ref{fig:mincost}, in which the manipulating costs for each state variable are  given by: 
\[
\begin{array}{cccccccc}
%& x_1 & x_2 & x_3  & x_4 &   x_5 &  x_6 & x_7 \\
c_I=&[ 50 & \infty &  10 & 10 & 1 &  10 & 20 ].
\end{array}
\]

The solutions to $\mathcal P_1$ and $\mathcal P_2$ are now presented,  resorting to Algorithm~\ref{solP1} and Algorithm~\ref{solP2}. Steps 2-3 from Algorithm~\ref{solP1}  and Algorithm~\ref{solP2} are  depicted in Figure~\ref{fig:mincostFDIC} and Figure~\ref{fig:mincost}, respectively.

We start by describing the solution to $\mathcal P_1$. As illustrated in Figure~\ref{fig:example}, the minimum number of dedicated inputs required to ensure structural controllability is $p=2$. Consequently, by Algorithm~\ref{solP1}, two slack variables,  denoted by $s_1,s_2$, are introduced. From each slack variable,  new edges to the state variables are introduced to obtain the bipartite graph $(\mathcal B(\bar A, \bar S);w)$ as described in Algorithm~\ref{solP1}; from $s_1$ only edges to the state variables in $\mathcal N^T_1$ are introduced with weights equal to the cost of actuating the respective variables, although only the edges with finite weight are depicted in Figure \ref{fig:mincostFDIC}, whereas from $s_2$  edges to all state variables are introduced. The solution of Step~3 in Algorithm \ref{solP1} is  given by $M^*$, and depicted  by the red edges in Figure  \ref{fig:mincostFDIC}-b). The red vertices correspond to the right-unmatched vertices associated with the maximum matching of the state bipartite graph,  to which dedicated inputs are assigned in Step 4 of Algorithm \ref{solP1}. Notice that $x_1$ is a variable with double role, i.e., besides being a right-unmatched vertex, it is also a variable in a non-top linked SCC.  The total cost of this assignment is equal to the sum of the costs  of actuating $x_1$ and $x_6$, summing up to $60$.

We now show that, by considering a scenario where more variables can be manipulated, we can achieve a lower cost for the presented example. Similarly to the previous example, we have two slack variables, from which edges are created to each of the state variables. In contrast to the previous example, the edges from $s_1$ also include the variables not in $\mathcal N^T_1$ (depicted by blue edges in Figure~\ref{fig:mincost}) with weights equal to the cost of actuating the state variable they end in plus the minimum cost of actuating a   state variable in $\mathcal N^T_1$, i.e.,  $x_3$, with cost  $10$. The minimum weight maximum matching  of $(\mathcal B(\bar A,\bar S);w')$ is given by $M'$ and depicted by red edges in Figure \ref{fig:mincost}-b). Again, the red vertices represent the right-unmatched vertices with respect to a maximum matching of the state bipartite graph, to which dedicated inputs are assigned in Step~4 of Algorithm~\ref{solP2}. Additionally, since one of the edges in blue belongs to $M'$, in this case departing from $s_1$, a dedicated input must be assigned to a state variable from $\mathcal N^T_1$ with the minimum cost, i.e., $x_3$. More precisely, a possible solutions consists in assigning dedicated inputs to the state variables $x_3,x_4,x_6$, incurring in a total cost of $30$.

\begin{figure}[!h]
\centering
\includegraphics[scale=0.35]{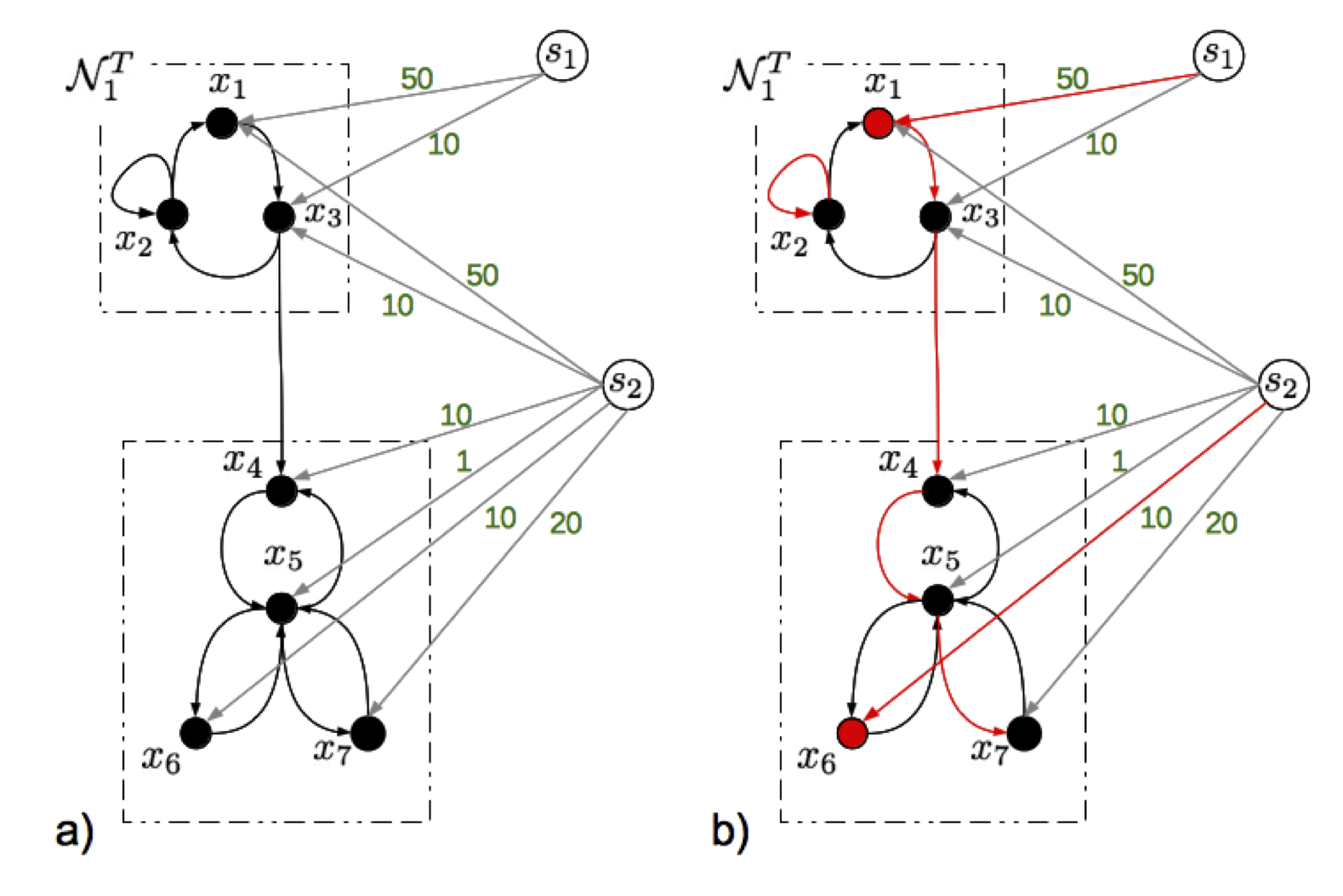}
\caption{ An illustrative example of a  digraph $\mathcal D(\bar A)$, where the SCCs are inscribed in the dashed boxes and the non-top linked SCC labeled by $\mathcal N^T_1$. The edges' costs are depicted by green labels,  the edges that are not depicted have  $\infty$ cost, and the edges between state variables have cost equal to $51$. The red edges correspond to the edges in a possible maximum matching $M$ of the bipartite graph $(\mathcal B(\bar A,\bar S);w)$. The red vertices represent the right-unmatched vertices with respect to a matching $M$ associated with $\mathcal B(\bar A)$. In a) the reduction presented in Step 2 of Algorithm \ref{solP1} is illustrated, where as many slack variables as the minimum number of dedicated inputs required to ensure structural controllability  are considered;  and b) the red edges represent the edges associated with a maximum matching associated with $(\mathcal B(\bar A,\bar S);w)$.}
\label{fig:mincostFDIC}
\end{figure}

\begin{figure}[!h]
\centering
\includegraphics[scale=0.35]{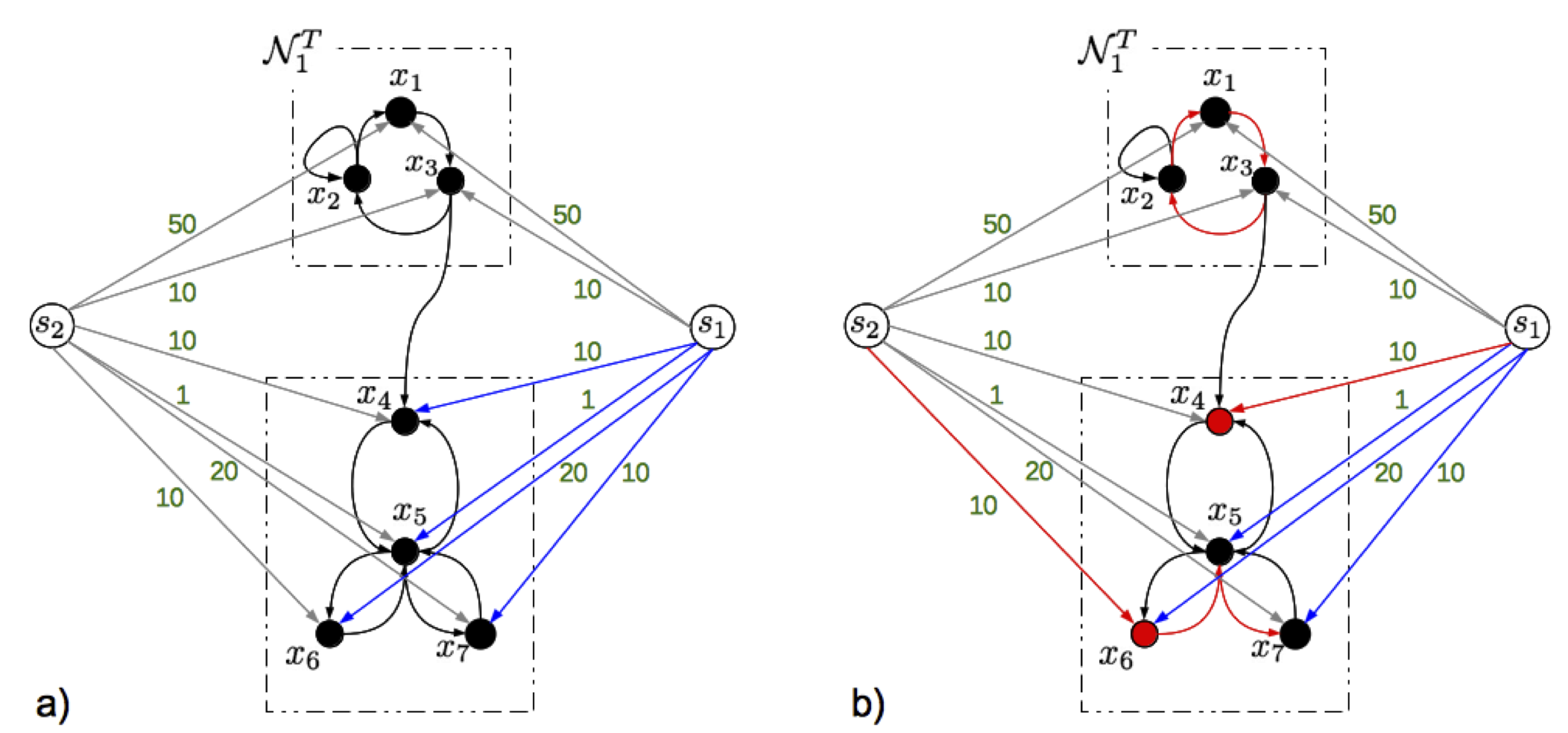}
\caption{ An illustrative example of a  digraph $\mathcal D(\bar A)$, where the SCCs are comprised in the dashed boxes and the non-top linked SCC labeled by $\mathcal N^T_1$. The edges' costs are depicted by green labels,  the edges that are not depicted have  $\infty$ cost, and the edges between state variables have cost equal to $51$. The red edges correspond to the edges in a possible maximum matching $M$ of the bipartite graph $(\mathcal B(\bar A,\bar S);w')$. The red vertices represent the right-unmatched vertices with respect to a matching $M$ associated with $\mathcal B(\bar A)$. Additionally, the edges depicted in blue correspond to the ones with weights consisting in the sum of the minimum cost associated with a state variable in a non-top linked SCC and the cost of the variable in which the edge ends.  In a) the reduction  presented in Step 2 of Algorithm~\ref{solP2} is illustrated, where as many slack variables as the minimum number of dedicated inputs required to ensure structural controllability are considered;  and b) the red edges represent the edges  in a maximum matching associated with $(\mathcal B(\bar A, \bar S);w')$. }
\label{fig:mincost}
\end{figure}

\section{Conclusions and Further Research}\label{conclusions}

In this paper, we provided a systematic method with polynomial   complexity (in the number of the state variables) in order to obtain  minimal cost placements of actuators  ensuring structural controllability of a given LTI system. The proposed solutions  holds under arbitrary non-homogeneous positive assignment costs  for the manipulation of the state variables. By duality, the results extend to the corresponding structural observability output design under cost constraints. The non-homogeneity of the allocation cost makes the framework particularly applicable to input (output) topology design in large-scale dynamic infrastructures, such as power systems, which consist of a large number of heterogeneous dynamic components with varying overheads for controller (sensor) placement and operation.  To the best of our knowledge, relaxing any of these constraints would lead to a strictly combinatorial problem for which no polynomial algorithms are expected to exist. Consequently,  future research may focus on the development of approximation algorithms under relaxed cost assumptions. Towards this goal, some of the  techniques and insights presented in this paper may be useful.

\small
\bibliographystyle{plain}        % Include this if you use bibtex 
\bibliography{automatica13}           % and a bib file to produce the

                                     % in the appendices.
\end{document}